\documentclass[a4paper,reqno,11pt]{amsart}
\usepackage{amsfonts,amsmath,amsthm,amssymb,stmaryrd,mathrsfs}
\newtheorem{theorem}{Theorem}[section]
\newtheorem{lemma}[theorem]{Lemma}

\newtheorem{Remark}[theorem]{Remark}

\newtheorem{Corollary}[theorem]{Corollary}
\numberwithin{equation}{section}
\allowdisplaybreaks

\usepackage[left=1 in, right=1 in,top=1 in, bottom=1 in]{geometry}



\arraycolsep=1.5pt


\newcommand{\na}{\nabla}

\newcommand{\al}{\alpha}

\newcommand{\la}{\lambda}

\newcommand{\pa}{\partial}

\providecommand{\norm}[1]{\left\Vert#1\right\Vert}

\providecommand{\norms}[1]{\left\vert#1\right\vert}
\def\r3{\mathbb{R}^3}


\begin{document}
\title[Compressible non-isentropic Euler-Maxwell system]{The well-posedness of the compressible non-isentropic Euler-Maxwell system in $\r3$}

\author{Zhong Tan}
\address{School of Mathematical Sciences\\
Xiamen University\\
Xiamen, Fujian 361005, China}
\email[Z. Tan]{ztan85@163.com}

\author{Yong Wang}
\address{School of Mathematical Sciences\\
Xiamen University\\
Xiamen, Fujian 361005, China}
\email[Y. Wang]{wangyongxmu@163.com}

\keywords{Compressible non-isentropic Euler-Maxwell system; Global solution; Time decay rate; Energy method; Interpolation.}

\subjclass[2010]{82D10; 35A01; 35B40; 35Q35; 35Q61.}

\thanks{Corresponding author: Yong Wang, wangyongxmu@163.com}

\thanks{Supported by the National Natural Science Foundation of China--NSAF (No. 10976026) and the National Natural
Science Foundation of China (Grant No. 11271305).}

\begin{abstract}
We first construct the global unique solution by assuming that the initial data is small in the $H^3$ norm but the higher order derivatives could be large. If further the initial data belongs to $\Dot{H}^{-s}$ ($0\le s<3/2$) or $\dot{B}_{2,\infty}^{-s}$ ($0< s\le3/2$), we obtain the various decay rates of the solution and its higher order derivatives. In particular, the decay rates of the density and temperature of electron could reach to $(1+t)^{-\frac{13}{4}}$ in $L^2$ norm.
\end{abstract}

\maketitle

\section{Introduction}
In the present paper, we consider the compressible non-isentropic Euler-Maxwell system (nonconservative form)
\cite{C,J,MRS}
\begin{equation}  \label{yiyi}
\left\{
\begin{array}{lll}
\displaystyle\partial_t\rho+{\rm div} (\rho u)=0,   \\
\displaystyle\partial_tu+u+u\cdot\na u+\na \Theta+\Theta\na\ln\rho=-(E+u\times \tilde{B}), \\
\displaystyle\partial_t\Theta+u\cdot\na \Theta+\frac23\Theta {\rm div}u=\frac13|u|^2-(\Theta-1), \\
\partial_t E-\na \times \tilde{B}=\rho u, & & \\
\partial_t \tilde{B}+\na \times E=0,  \\
{\rm div} E=1-\rho,\ \ {\rm div} \tilde{B}=0, \\
(\rho,u,\Theta,E,\tilde{B})|_{t=0}=(\rho_{0},u_{0},\Theta_0,E_0,\tilde{B}_0),\ x\in\mathbb{R}^3.
\end{array}
\right.
\end{equation}
The unknown functions $\rho, u, \Theta, E, \tilde{B} $ represent the electron density,
electron velocity, absolute temperature, electric field and magnetic field, respectively. In the motion of the fluid, due to the greater inertia the ions merely provide a constant charged background.

Although the compressible Euler-Maxwell system is more and more important in the researches of plasma physics and semiconductor physics, a small amount of results are obtained since its mathematical complexity. In a unipolar form:  Chen, Jerome and Wang \cite{CJW} showed the one-dimensional global existence of entropy weak solutions to the initial-boundary value problem for arbitrarily large initial data in $L^{\infty}(\mathbb{R})$; Guo and Tahvildar-Zadeh \cite{GT}
showed a blow-up criterion for spherically symmetric Euler-Maxwell system;
Recently, there are some results on the global existence
and the large time behavior of smooth solutions with small perturbations, see Tan et al. \cite{TWW}, Duan \cite{D}, Ueda and Kawashima \cite{UK},  Ueda et al. \cite{UWK}; For the asymptotic limits that derive simplified models starting from the Euler-Maxwell system, we refer to \cite{HP,PWG,X} for the relaxation limit, \cite{X} for the non-relativistic limit, \cite{PW1,PW2} for the quasi-neutral limit, \cite{T1,T2} for WKB asymptotics and the references therein.
In a bipolar form: Duan et al. \cite{DLZ} showed the global existence and time-decay rates of solutions near constant steady states with the vanishing electromagnetic field; Xu et al. \cite{XXK} studied the well-posedness in critical Besov spaces. Since the unipolar or bipolar Euler-Maxwell system is a symmetrizable hyperbolic system, the Cauchy problem in $\r3$ has a local unique smooth solution when the initial data is smooth, see Kato \cite{K} and Jerome \cite{J} for instance. Besides, we can refer to \cite{FWK,WFL} for the non-isentropic case.

In this paper, we will refine a global existence of smooth solutions near the constant equilibrium $(1,0,1,0,B_\infty)$ to the compressible non-isentropic Euler-Maxwell system and show some various time decay rates of the solution as well as its spatial derivatives of any order. We should highlight that our results highly depend on the relaxation terms of velocity and temperature. The non-relaxation case is much more difficult, we refer to \cite{GM,G} for such a case. Compared with the compressible isentropic Euler-Maxwell system \cite{TWW}, there are two main difficulties except the computational complexity. The first difficulty is that we must obtain the symmetric hyperbolic non-isentropic Euler-Maxwell system to do the effective energy estimates. We solve this problem by making good use of the positive upper and lower bounds of density and temperature \eqref{n}. The other difficulty is how to deduce the higher decay rates of density and temperature, where is different from the isentropic case \cite{TWW} since the influence of temperature. To overcome this obstacle, we extract two new systems \eqref{npsi yi} and \eqref{npsitheta yi} from the system \eqref{yi}, then we use a bootstrap method to derive \eqref{further decay2}.

Set$$n=\rho-1,\ \ \theta=\Theta-1,\ \ B=\tilde{B}-B_\infty.$$
Then the Euler-Maxwell system \eqref{yiyi} is reformulated equivalently as
\begin{equation}  \label{yi}
\left\{
\begin{array}{lll}
\displaystyle\partial_tn=-u\cdot\na n-(1+n){\rm div} u,   \\
\displaystyle\partial_tu+u+E+\na \theta+u\times B_{\infty}=-u\cdot\na u-\frac{1+\theta}{1+n}\na n-u\times B, \\
\displaystyle\partial_t\theta+\theta=-u\cdot\na \theta-\frac23(1+\theta) {\rm div}u+\frac13|u|^2, \\
\partial_t E-\na \times B-u=n u, & & \\
\partial_t B+\na \times E=0,  \\
{\rm div} E=-n,\ \ {\rm div} B=0, \\
(n,u,\theta,E,B)|_{t=0}=(n_{0},u_{0},\theta_0,E_0,B_0),\ x\in\mathbb{R}^3.
\end{array}
\right.
\end{equation}

For $N\ge 3$, we define the energy functional by
\begin{equation*}
\mathcal{E}_N(t):=\sum_{l=0}^{N}\norm{ \na^{l}(n, u,\theta, E, B )}_{L^2}^2
\end{equation*}
and the corresponding dissipation rate by
\begin{equation*}
\mathcal{D}_N(t):=\sum_{l=0}^{N}\norm{\na^{l}(n,u,\theta)}_{L^2}^2+\sum_{l=0}^{N-1}\norm{\na^{l}E}_{L^2}^2+ \sum_{l=1}^{N-1}\norm{\na^{l} B}_{L^2}^2.
\end{equation*}

Our first main result about the global unique solution to the system \eqref{yi} is stated as follows.
\begin{theorem}\label{existence}
Assume the initial data satisfy the compatible conditions
\begin{equation*}
{\rm div}  {E}_0=-n_0,\ \ {\rm div}  {B}_0=0.
\end{equation*}
There exists a sufficiently small $\delta_0>0$ such that if $\mathcal{E}_3(0)\le \delta_0$, then there exists a unique global solution
$(n,u,\theta,E,B)(t)$ to the Euler-Maxwell system \eqref{yi} satisfying
\begin{equation}\label{energy inequality}
\sup_{0\leq t\leq \infty }\mathcal{ E}_3(t)+\int_{0}^{\infty }%
\mathcal{D}_3(\tau)\,d\tau\leq C\mathcal{ E}_3(0).
\end{equation}

Furthermore, if $\mathcal{E}_N(0)<+\infty$ for any $N\ge 3$, there exists an increasing continuous function $P_N(\cdot)$ with $P_N(0)=0$ such that the unique solution satisfies
\begin{equation}\label{energy inequality N}
\sup_{0\leq t\leq \infty }\mathcal{ E}_N(t)+\int_{0}^{\infty }%
\mathcal{D}_N(\tau)\,d\tau\leq P_N\left(\mathcal{ E}_N(0)\right).
\end{equation}
\end{theorem}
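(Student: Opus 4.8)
The plan is to couple the local existence theory for symmetric hyperbolic systems with a uniform-in-time a priori energy estimate, and to close both assertions by a continuity argument. \emph{Local existence.} Because the bounds in \eqref{n} keep $1+n$ and $1+\theta$ inside a fixed compact subset of $(0,\infty)$, I would multiply the continuity equation of \eqref{yi} by $\frac{1+\theta}{1+n}$; this symmetrizes the $(n,u,\theta)$-block, making the $\frac{1+\theta}{1+n}\na n$ term in the momentum equation and the $\diverge u$ term in the continuity equation mutually adjoint at top order, while the Maxwell block $\pa_t E-\na\times B$, $\pa_t B+\na\times E$ is already skew. Thus \eqref{yi} is symmetrizable hyperbolic and Kato's theorem \cite{K,J} provides a unique solution on a maximal interval $[0,T)$ with $\mathcal{E}_N(t)$ continuous; everything reduces to an a priori bound that prevents $T<\infty$.

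\emph{Direct dissipation.} Fix $N\ge3$ and assume a priori $\sup_{[0,T]}\mathcal{E}_3(t)\le\delta$ with $\delta$ small (so that \eqref{n} holds). For $0\le l\le N$ I apply $\na^l$ to \eqref{yi} and pair it in $L^2$ with $\big(\frac{1+\theta}{1+n}\na^l n,\ \na^l u,\ \na^l\theta,\ \na^l E,\ \na^l B\big)$. The skew terms (notably $u\times B_\infty$, which is pointwise orthogonal to $u$, and the entire Maxwell coupling) drop out, the symmetrization cancels the dangerous top-order coupling between $\na^{l+1}n$ and $\na^l u$, and the relaxation terms $+u$, $+\theta$ produce the direct dissipation. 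This gives, schematically,
\begin{equation*}
\frac{d}{dt}\sum_{l=0}^N\|\na^l(n,u,\theta,E,B)\|_{L^2}^2+c\sum_{l=0}^N\|\na^l(u,\theta)\|_{L^2}^2\le(\text{nonlinear terms}).
\end{equation*}
The dissipation of $n$, $E$ and $B$ is invisible here and must be produced separately.

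\emph{Interactive functionals.} To recover $\mathcal{D}_N$ I would add three families of cross terms with coefficients $0<\ka_3\ll\ka_2\ll\ka_1\ll1$. Pairing $\na^l u$ with $\na^{l+1}n$ and combining the momentum equation (in which $\tfrac{1+\theta}{1+n}\na n\simnew\na n$) with the continuity equation produces $-\|\na^{l+1}n\|_{L^2}^2$ up to already-controlled quantities, giving the dissipation of $\na n,\dots,\na^N n$, while $\|n\|_{L^2}$ is recovered from $\diverge E=-n$ via $\|n\|_{L^2}^2=\langle\na n,E\rangle$. Pairing $\na^l u$ with $\na^l E$ and using $\pa_t E=\na\times B+u+nu$ against the $E$-term in the momentum equation gives $-\|\na^l E\|_{L^2}^2$, hence the dissipation of $E$ up to order $N-1$. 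Pairing $\na^l E$ with $\na\times\na^l B$ and invoking both Maxwell equations produces $\|\na\times\na^l B\|_{L^2}^2=\|\na^{l+1}B\|_{L^2}^2$ (using $\diverge B=0$), hence the dissipation of $B$ for orders $1$ to $N-1$. The resulting Lyapunov functional
\begin{equation*}
\widetilde{\mathcal{E}}_N=\sum_{l=0}^N\|\na^l(n,u,\theta,E,B)\|_{L^2}^2+\sum_{l}\Big(\ka_1\langle\na^l u,\na^{l+1}n\rangle+\ka_2\langle\na^l u,\na^l E\rangle+\ka_3\langle\na^l E,\na\times\na^l B\rangle\Big)
\end{equation*}
satisfies $\widetilde{\mathcal{E}}_N\simnew\mathcal{E}_N$ and $\frac{d}{dt}\widetilde{\mathcal{E}}_N+c\,\mathcal{D}_N\le(\text{nonlinear terms})$.

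\emph{Nonlinear estimates and closing.} The main obstacle is to show that every nonlinear term is dominated by $C\sqrt{\mathcal{E}_3}\,\mathcal{D}_N+C\,\mathcal{D}_3\,\mathcal{E}_N$. After the symmetrization removes the genuine top-order terms, what remains are commutator- and Moser-type products; using the three-dimensional embedding $H^2\hookrightarrow L^\infty$ together with Kato--Ponce and composite-function estimates, each such product can be arranged so that the factor placed in $L^\infty$ is measured by the low norm $\mathcal{E}_3^{1/2}\le\delta^{1/2}$ and the top-order factor lies in $\mathcal{D}_N$. This is precisely why smallness is required only in $H^3$: the possibly large higher norms enter only multiplied by the small and time-integrable $H^3$ quantities. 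Choosing $\delta$ so that $C\sqrt{\delta}\le c/2$ and absorbing, I obtain $\frac{d}{dt}\widetilde{\mathcal{E}}_N+\frac{c}{2}\mathcal{D}_N\le C\,\mathcal{D}_3\,\mathcal{E}_N$. For $N=3$ the right side is $\le C\delta\,\mathcal{D}_3$ and is absorbed for $\delta$ small, so $\frac{d}{dt}\widetilde{\mathcal{E}}_3+\tfrac{c}{4}\mathcal{D}_3\le0$; integrating and using $\widetilde{\mathcal{E}}_3\simnew\mathcal{E}_3$ yields \eqref{energy inequality}, and a standard continuity argument fixes $\delta_0$ so small that $\mathcal{E}_3(t)\le\delta$ is propagated, forcing $T=\infty$. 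For general $N\ge3$, since along this global solution $\int_0^\infty\mathcal{D}_3\,d\tau\le C\mathcal{E}_3(0)\le C\delta_0$, Gronwall's inequality gives $\mathcal{E}_N(t)\le C\mathcal{E}_N(0)\exp\!\big(C\mathcal{E}_3(0)\big)$, and integrating once more bounds $\int_0^\infty\mathcal{D}_N\,d\tau$; setting $P_N(s)=Cs\,e^{C\delta_0}$, which is continuous, increasing and vanishes at $0$, yields \eqref{energy inequality N}.
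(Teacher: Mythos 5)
Your overall architecture — symmetrization, $\na^l$ energy estimates, the three interactive functionals $\langle\na^l u,\na^{l+1}n\rangle$, $\langle\na^l u,\na^l E\rangle$, $\langle\na^l E,\na\times\na^l B\rangle$, a continuity argument at the $H^3$ level, and Gronwall for higher $N$ — is exactly the paper's. But there is a concrete error in the step you rely on most: your choice of multipliers does \emph{not} symmetrize the system. Pairing the equations with $\bigl(\tfrac{1+\theta}{1+n}\na^l n,\ \na^l u,\ \na^l\theta\bigr)$, the ${\rm div}\,u$ term in the continuity equation contributes $-(1+\theta)\na^l{\rm div}\,u\,\na^l n$ while the $\tfrac{1+\theta}{1+n}\na n$ term in the momentum equation contributes $-\tfrac{1+\theta}{1+n}\na\na^l n\cdot\na^l u$; after integration by parts these cancel only if the weights satisfy $s_u\cdot\tfrac{1+\theta}{1+n}=s_n\cdot(1+n)$, which forces $s_u=1+n$, not $s_u=1$. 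With your weights the residual is, at top order,
\begin{equation*}
\int\Bigl[(1+\theta)-\tfrac{1+\theta}{1+n}\Bigr]\na^l u\cdot\na\na^l n,
\end{equation*}
which for $l=N$ involves $N+1$ derivatives of $n$ (or, after another integration by parts, of $u$) and is controlled by neither $\mathcal{E}_N$ nor $\mathcal{D}_N$. The $\theta$--$u$ coupling is worse: the momentum equation carries $\na\theta$ with coefficient $1$ while the temperature equation carries ${\rm div}\,u$ with coefficient $\tfrac23(1+\theta)$, so with unit weights the mismatch $\tfrac13-\tfrac23\theta$ is not even small, and the uncancelled term $\int\na^l{\rm div}\,u\,\na^l\theta$ loses a derivative already at $N=3$. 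The paper's multipliers $\bigl(\tfrac{1+\theta}{1+n}\na^l n,\ (1+n)\na^l u,\ \tfrac32\tfrac{1+n}{1+\theta}\na^l\theta\bigr)$ are chosen precisely to make both couplings mutually adjoint; this is the "first difficulty" of the non-isentropic case that the introduction flags, and your proposal as written does not resolve it.

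Two secondary remarks. First, once the symmetrizer is corrected, your plan to bound every nonlinearity by $C\sqrt{\mathcal{E}_3}\,\mathcal{D}_N+C\mathcal{D}_3\,\mathcal{E}_N$ and close with a single Gronwall is plausibly achievable using only the endpoint Moser estimate (every $L^\infty$ factor is one of $n,u,\theta,E,B$ without derivatives, hence bounded by $\sqrt{\mathcal{E}_3}$, and by $\sqrt{\mathcal{D}_3}$ when it is a dissipated variable); this would avoid the paper's induction on $N$, which yields only $\frac{d}{dt}\widetilde{\mathcal{E}}_N+\tfrac12\mathcal{D}_N\le C_N\mathcal{D}_{N-1}\mathcal{E}_N$ because it reuses the interpolated estimates of Lemmas \ref{u}--\ref{other di} (needed later for the decay theorem, where $\norm{B}_{L^\infty}$ must be avoided in favor of $\norm{\na B}_{L^2}$). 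Second, you should verify the nontrivial case split in the $\langle\na^l u,\na^l E\rangle$ functional between $l\le N-2$ and $l=N-1$ when handling $\int\na^l u\cdot\na\times\na^l B$, since $\na^N B$ is not dissipated; the paper does this in \eqref{si E}--\eqref{wu E}, and your sketch does not address it.
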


In the proof of Theorem \ref{existence}, the major difficulties are the influence of the temperature and the regularity-loss of the electromagnetic field. We will do the refined energy estimates stated in Lemma \ref{u}--\ref{other di}, which allow us to deduce
\begin{equation*}
\frac{d}{dt} \mathcal{E} _3+\mathcal{D}_3\lesssim\sqrt{\mathcal{E} _3}\mathcal{D}_3
\end{equation*}
and for $N\ge 4$,
\begin{equation*}
\frac{d}{dt} {\mathcal{E}}_N+ \mathcal{D}_N
 \le  C_{N } {\mathcal{D}_{N-1}} {\mathcal{E}_N}.
\end{equation*}
Then Theorem \ref{existence} follows in the fashion of \cite{G12,W12,TWW}.

Our second main result is on some various decay rates of the solution to the system \eqref{yi} by making the much stronger assumption on the initial data.
\begin{theorem}\label{decay}
Assume that $(n,u,\theta,E,B)(t)$ is the solution to the
Euler-Maxwell system \eqref{yi} constructed in Theorem \ref{existence} with $
N\geq 5$. There exists a sufficiently small $\delta_0=\delta_0(N)$ such that if $\mathcal{ {E}}_N(0)\le \delta_0$,
and assuming that $(u_0,\theta_0,E_0,B_0)\in \dot{H}^{-s}$ for some $s\in [0,3/2)$ or $(u_{0},\theta_0,E_0,B_0)\in \dot{B}_{2,\infty}^{-s}$ for some $s\in (0,3/2]$,  then we have
\begin{equation}\label{H-sbound}
\norm{(u,\theta,E,B)(t)}_{\dot{H}^{-s}}\le C_0
\end{equation}or
\begin{equation}\label{H-sbound Besov}
\norm{(u,\theta,E,B)(t)}_{\dot{B}_{2,\infty}^{-s}}\le C_0.
\end{equation}
Moreover, for any fixed integer $k\ge 0$, if $N\ge 2k+2+s$, then
\begin{equation}\label{basic decay}
\norm{\na^k(n,u,\theta,E,B)(t)}_{L^2}\le  C_0 (1+ t)^{- \frac{k+s}{2} }.
\end{equation}

Furthermore, for any fixed integer $k\ge 0$, if $N\ge2k+4+s$, then
\begin{equation}\label{further decay1}
\norm{\na^k(n,u,\theta,E)(t)}_{L^2}\le  C_0 (1+ t)^{- \frac{k+1+s}{2} };
\end{equation}
if $N\ge2k+6+s$, then
\begin{equation}\label{further decay11}
\norm{\na^k n (t)}_{L^2}\le  C_0 (1+ t)^{- \frac{k+2+s}{2} };
\end{equation}
if $N\ge2k+12+s$ and $ B_\infty =0$, then
\begin{equation}\label{further decay2}
\norm{\na^k (n,\theta,{\rm div}u) (t)}_{L^2}\le  C_0 (1+ t)^{- (\frac k2+\frac74+s) }.
\end{equation}
\end{theorem}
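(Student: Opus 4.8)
The plan is to isolate from \eqref{yi} the compressible ``acoustic'' block governing $(n,{\rm div}\,u,\theta)$ and to show that, precisely when $B_\infty=0$, this block is \emph{uniformly dissipative}, so that the decay of $(n,\theta,{\rm div}\,u)$ is fixed by the size of its nonlinear forcing alone. Writing $d:={\rm div}\,u$ (this is the role played by the auxiliary variable in \eqref{npsi yi}--\eqref{npsitheta yi}), taking the divergence of the momentum equation, and using both the constraint ${\rm div}\,E=-n$ and the leading pressure term $\na n$, the linear part reads
\[
\partial_t n=-d,\qquad \partial_t d=-d+(1-\De)n-\De\theta+\cdots,\qquad \partial_t\theta=-\theta-\tfrac23 d+\cdots .
\]
Its Fourier symbol has characteristic polynomial $\mu^3+2\mu^2+(2+\tfrac53|k|^2)\mu+(1+|k|^2)$, whose Hurwitz determinant $3+\tfrac73|k|^2$ is positive for every $k$; hence all three roots lie strictly in the left half-plane with real part bounded away from the imaginary axis uniformly in $k$. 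In energy language this furnishes a Lyapunov functional $\CL_k\approx\norm{\na^k(n,d,\theta)}_{L^2}^2$ with $\frac{d}{dt}\CL_k+c\,\CL_k\lesssim\norm{\na^k(\text{sources})}_{L^2}^2$. The crucial point is that the slowly decaying, regularity-loss electromagnetic field couples into the velocity equation only through $u\times B_\infty$; setting $B_\infty=0$ deletes this coupling and decouples the acoustic block, which is exactly why the hypothesis $B_\infty=0$ cannot be dropped.

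With the uniform gap in hand, a Gronwall/Duhamel argument reduces \eqref{further decay2} to showing that every nonlinear source decays like $(1+t)^{-(\frac k2+\frac74+s)}$ in $H$-norm, for which I would insert the rates already established in \eqref{basic decay}, \eqref{further decay1} and \eqref{further decay11}. Inspection shows that all quadratic terms assembled from $(n,u,\theta)$ and their derivatives (such as $u\cdot\na n$, $u\cdot\na u$, $\theta\,{\rm div}\,u$) beat the target, so the rate is dictated by two borderline terms: the heating term $\tfrac13|u|^2$ in the $\theta$-equation and the Lorentz term $-{\rm div}(u\times B)$ in the $d$-equation. The first is immediate: the Gagliardo--Nirenberg bound $\norm{\na^k(|u|^2)}_{L^2}\lesssim\norm{u}_{L^\infty}\norm{\na^k u}_{L^2}+\cdots$ together with $\norm{u}_{L^\infty}\lesssim\norm{\na u}_{L^2}^{1/2}\norm{\na^2 u}_{L^2}^{1/2}$ and the improved velocity rate \eqref{further decay1} gives exactly $(1+t)^{-(\frac k2+\frac74+s)}$.

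The genuine obstacle is the Lorentz term, since a direct estimate yields only $\norm{u\times B}_{L^2}\lesssim(1+t)^{-(\frac54+s)}$ --- half a power short, because $B$ carries only the slow rate \eqref{basic decay}. To recover the missing half-power I would never bound $u\times B$ directly, but exploit its divergence structure via ${\rm div}(u\times B)=B\cdot(\na\times u)-u\cdot(\na\times B)$ together with the Amp\`ere law $\na\times B=\partial_t E-(1+n)u$. Then $B\cdot(\na\times u)$ is controlled by $\norm{B}_{L^\infty}\norm{\na u}_{L^2}\lesssim(1+t)^{-(\frac74+s)}$; the Amp\`ere substitution turns $-u\cdot(\na\times B)$ into $(1+n)|u|^2$, again at the target rate; and the leftover $u\cdot\partial_t E$ is transferred onto a time derivative by integrating by parts in $t$ inside the Duhamel integral. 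All sources thus sit at $(1+t)^{-(\frac k2+\frac74+s)}$, and the gapped energy inequality propagates this rate to $(n,\theta,{\rm div}\,u)$.

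Two features explain the remaining structure of the statement. First, several sources ($u\cdot\na\theta$, $\theta\,{\rm div}\,u$, \dots) contain the very unknowns being estimated, so the final rate is reached by a self-improving bootstrap: one feeds in a provisional rate, the quadratic self-interaction returns a faster one, and the iteration saturates at the bottleneck $7/4+s$ set by $|u|^2$ and $u\times B$. Second, closing the source bounds requires \eqref{further decay1} at orders up to about $k+2$, the $L^\infty$ interpolations above, and the extra derivative spent in the Amp\`ere rewriting and the time integration by parts; tracking this budget is what forces $N\ge 2k+12+s$. I expect the hardest part to be not any single estimate but the simultaneous bookkeeping that keeps all electromagnetic contributions exactly at the borderline rate while staying within the available regularity.
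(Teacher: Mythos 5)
Your proposal only addresses the last assertion \eqref{further decay2}; it explicitly takes \eqref{basic decay}, \eqref{further decay1} and \eqref{further decay11} as ``already established'' inputs. But those, together with the propagation of the negative norm \eqref{H-sbound}--\eqref{H-sbound Besov}, are the core of the theorem and the genuinely hard part of the paper's proof: one must first close the $\dot H^{-s}$ bound by a case analysis in $s$ (the range $s>1/2$ requires feeding back the decay already proved for smaller $s$, and the term $\na n/(1+n)$ has to be handled through the Poisson equation since it has no negative-norm smallness), and then obtain \eqref{basic decay} despite the regularity loss in $(E,B)$ by working with the truncated energies $\mathcal{E}_k^{k+2}$, interpolating $\norm{\na^{k+2}(E,B)}_{L^2}$ between $\na^{k+1}$ and $\na^N$, and combining with Lemma \ref{1-sinte}. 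None of this appears in your argument, so as a proof of the stated theorem there is a substantial gap.

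For the part you do treat, your route is essentially the paper's: isolate the block in $(n,\operatorname{div}u,\theta)$, observe that $B_\infty=0$ decouples it and that $\operatorname{div}E=-n$ supplies the restoring term, and run a gapped Gronwall plus bootstrap. Two caveats. First, your spectral argument keeps $(1-\De)n$ and $-\De\theta$ in the ``linear part''; the Hurwitz positivity you compute gives stability for each fixed $k$ but not automatically a gap uniform in $k$, and since the actual coefficient is $\frac{1+\theta}{1+n}$ rather than $1$ you cannot diagonalize in Fourier anyway --- the paper avoids both issues by keeping only the constant-coefficient $2\times2$ block $\partial_tn=-\psi$, $\partial_t\psi=-\psi+n$ (spectral gap $1/2$) and treating $\De\theta$ and $\operatorname{div}(\frac{1+\theta}{1+n}\na n)$ as sources; that choice is precisely what forces the two-pass bootstrap (upgrade $\norm{\na^{k+2}(n,\theta)}_{L^2}$ using the first pass at order $k+2$, whence $N\ge 2k+12+s$), a mechanism different from the ``quadratic self-interaction'' you describe. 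Second, your claimed obstacle with the Lorentz term is spurious: the source is $\na^k\operatorname{div}(u\times B)$, and the direct Moser-type bound
$\norm{\na^{k+1}(u\times B)}_{L^2}\lesssim \norm{u}_{L^\infty}\norm{\na^{k+1}B}_{L^2}+\norm{\na u}_{L^\infty}\norm{\na^{k}B}_{L^2}+\norm{B}_{L^\infty}\norm{\na^{k+1}u}_{L^2}$
already lands exactly at $(1+t)^{-(k/2+7/4+s)}$ once $\norm{u}_{L^\infty}^2\lesssim\norm{\na u}_{L^2}\norm{\na^2u}_{L^2}$ and \eqref{further decay1} are used; the Amp\`ere-law rewriting and the time integration by parts in Duhamel are unnecessary complications introduced by distributing the norms suboptimally (your $(1+t)^{-(5/4+s)}$ comes from the zero-derivative quantity $\norm{u}_{L^\infty}\norm{B}_{L^2}$, which is not what occurs in the energy inequality).
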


In the proof of Theorem \ref{decay}, we mainly use the regularity interpolation method developed in  Strain and Guo \cite{SG06}, Guo and Wang \cite{GW} and Sohinger and Strain \cite{SS}. To prove the optimal decay rate of the
dissipative equations in the whole space, Guo and Wang \cite{GW} developed a general energy
method of using a family of scaled energy estimates with minimum derivative counts and interpolations
among them. However, this method can not be applied directly to the compressible non-isentropic Euler-Maxwell system which is of regularity-loss. To overcome this obstacle caused by the regularity-loss of the electromagnetic field, we deduce from Lemma \ref{u}--\ref{other di} that
\begin{equation*}
\frac{d}{dt} {\mathcal{E}}_k^{k+2}+\mathcal{D}_k^{k+2}\le C_k   \norm{(n,u)}_{L^\infty}\norm{\na^{k+2}(n,u)}_{L^2}
\norm{ \na^{k+2}( E,  B )}_{L^2},
\end{equation*}
where ${\mathcal{E}}_k^{k+2}$ and $\mathcal{D}_k^{k+2}$ with minimum derivative counts are defined by \eqref{1111} and \eqref{2222} respectively.
Then combining the methods of \cite{GW,SS} and a trick of Strain and Guo \cite{SG06} to treat the electromagnetic field, we are able to conclude the decay rate \eqref{basic decay}. If in view of the whole solution, the decay rate \eqref{basic decay} can be regarded as be optimal. The higher decay rates \eqref{further decay1}--\eqref{further decay2} follow by revisiting the equations carefully. In particular, we will use a bootstrap argument to derive \eqref{further decay2}.

By Theorem \ref{decay} and Lemma \ref{Riesz lemma}--\ref{Lp embedding}, we have the following corollary of the usual $L^p$--$L^2$ type of the decay results:
\begin{Corollary}\label{2mainth}
Under the assumptions of Theorem \ref{decay} except that we replace the $\dot{H}^{-s}$ or $\Dot{B}_{2,\infty}^{-s}$ assumption  by that $(u_0,\theta_0,E_0,B_0)\in L^p$ for some $p\in [1,2]$, then for any fixed integer $k\ge 0$, if $N\ge 2k+2+ s_p $, then
\begin{equation*}
\norm{\na^k(n,u,\theta,E,B)(t)}_{L^2}\le  C_0 (1+ t)^{- \frac{k+s_p}{2} }.
\end{equation*}
Here the number $s_{p} :=3\left(\frac{1}{p}-\frac{1}{2}\right)$.

Furthermore, for any fixed integer $k\ge 0$, if $N\ge2k+4+s_p$, then
\begin{equation} \label{p22}
\norm{\na^k(n,u,\theta,E)(t)}_{L^2}\le  C_0 (1+ t)^{- \frac{k+1+s_p}{2} };
\end{equation}
if $N\ge2k+6+s_p$, then
\begin{equation*}
\norm{\na^k n (t)}_{L^2}\le  C_0 (1+ t)^{- \frac{k+2+s_p}{2} };
\end{equation*}
if $N\ge2k+12+s_p$ and $ B_\infty =0$, then
\begin{equation}\label{n L^1}
\norm{\na^k (n,\theta,{\rm div}u) (t)}_{L^2}\le  C_0 (1+ t)^{- (\frac k2+\frac74+s_p) }.
\end{equation}
\end{Corollary}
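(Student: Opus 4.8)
The plan is to deduce Corollary \ref{2mainth} directly from Theorem \ref{decay} by showing that the $L^p$ hypothesis on the initial data implies the corresponding negative-order homogeneous Sobolev or Besov bound with exponent $s=s_p$; once this is done, each conclusion of the corollary is literally the conclusion of Theorem \ref{decay} after the substitution $s\mapsto s_p=3(\tfrac1p-\tfrac12)$, so no new energy or dissipation estimates are needed. Since the hypothesis reads ``Under the assumptions of Theorem \ref{decay}'', the smallness $\mathcal{E}_N(0)\le\delta_0$ and the regularity $N\ge 5$ are already in force, and only the placement of the data in a negative-index space has to be supplied.

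First I would invoke the embedding lemmas Lemma \ref{Riesz lemma}--\ref{Lp embedding}. The relevant fact is the Riesz-potential (Hardy--Littlewood--Sobolev) embedding
\[
L^p(\mathbb{R}^3)\hookrightarrow \dot{H}^{-s_p}(\mathbb{R}^3),\qquad s_p=3\Big(\tfrac1p-\tfrac12\Big),
\]
valid precisely for $1<p\le 2$, equivalently $0\le s_p<3/2$. Applying it componentwise to $(u_0,\theta_0,E_0,B_0)\in L^p$ gives $(u_0,\theta_0,E_0,B_0)\in \dot{H}^{-s_p}$ with $s_p\in[0,3/2)$, which is exactly the first hypothesis of Theorem \ref{decay}. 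I would then read off \eqref{basic decay}, \eqref{further decay1}, \eqref{further decay11} and \eqref{further decay2} with $s=s_p$, which are precisely the stated conclusions \eqref{p22}--\eqref{n L^1}.

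The only delicate point is the endpoint $p=1$, where $s_p=3/2$ and the $\dot{H}^{-3/2}$ embedding fails; this is exactly the reason Theorem \ref{decay} was stated in two parallel forms, one for $\dot{H}^{-s}$ with $s\in[0,3/2)$ and one for $\dot{B}^{-s}_{2,\infty}$ with $s\in(0,3/2]$. For $p=1$ I would instead use the weaker Besov embedding $L^1(\mathbb{R}^3)\hookrightarrow \dot{B}^{-3/2}_{2,\infty}(\mathbb{R}^3)$, and more generally $L^p(\mathbb{R}^3)\hookrightarrow \dot{B}^{-s_p}_{2,\infty}(\mathbb{R}^3)$ for $1\le p<2$, which places the data in the scale $\dot{B}^{-s}_{2,\infty}$ with $s=s_p\in(0,3/2]$ and matches the second hypothesis of Theorem \ref{decay}. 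The main (and essentially the only) obstacle is this correct handling of the $p=1$ endpoint through the Besov space rather than the Sobolev space; away from the endpoint the argument is a one-line application of the embedding followed by Theorem \ref{decay}.
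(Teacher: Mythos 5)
Your proposal is correct and coincides with the paper's own (one-line) argument: the authors likewise obtain the corollary by combining Lemma \ref{Riesz lemma} (the embedding $L^p\subset\dot{H}^{-s_p}$ for $1<p\le 2$) and Lemma \ref{Lp embedding} (the embedding $L^p\subset\dot{B}_{2,\infty}^{-s_p}$, covering the endpoint $p=1$ where $s_p=3/2$) with Theorem \ref{decay}. Your explicit care at the $p=1$ endpoint matches exactly the remark the authors make about why the Besov-space version of Theorem \ref{decay} is needed.
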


The followings are several remarks for Theorem \ref{existence}--\ref{decay} and Corollary \ref{2mainth}.

\begin{Remark}
In Theorem \ref{existence}, we only assume that the initial data is small in the $H^3$ norm but the higher order derivatives could be large.
Notice that in Theorem \ref{decay} the $\dot{H}^{-s}$ and $\dot{B}_{2,\infty}^{-s}$ norms of the solution are preserved along the time evolution, however, in Corollary \ref{2mainth} it is difficult to show that the $L^p$ norm of the solution can be preserved.
Note that the $L^2$ decay rate of the higher order spatial derivatives of the solution
is obtained. Then the general optimal $L^q$ $(2\le q\le \infty)$ decay rates of the solution follow by the Sobolev
interpolation.
\end{Remark}

\begin{Remark}
In Theorem \ref{decay}, the space $\Dot{H}^{-s}$ or $\Dot{B}_{2,\infty}^{-s}$ was introduced there to enhance the decay rates.
By the usual embedding theorem, we know that for $p\in (1,2]$, $L^p\subset \Dot{H}^{-s}$ with $s=3(\frac{1}{p}-\frac{1}{2})\in[0,3/2)$. Meantime, we note that the endpoint embedding  $L^1\subset \Dot{B}_{2,\infty}^{-\frac{3}{2}}$ holds. Hence the $L^p$--$L^2(1\le p\le2)$ type of the optimal decay results follows as a corollary.
\end{Remark}

\begin{Remark}
We remark that Corollary \ref{2mainth} not only provides an alternative approach to derive the $L^p$--$L^2$ type
of the optimal decay results but also improves the previous results of the  $L^p$--$L^2$ approach in Feng et al. \cite{FWK}.
In Feng et al. \cite{FWK}, assuming that $B_\infty=0$ and $\norm{(u_0,E_0,B_0)}_{L^1}$ is sufficiently small, by combining the energy method and the linear decay analysis, Feng showed that
\begin{equation*}
\norm{(n,\theta) (t)}_{L^2}\le  C_0 (1+ t)^{- \frac{11}{4} },\ \norm{(u,B)(t)}_{L^2}\le  C_0 (1+ t)^{- \frac{3}{4} } \text{ and } \norm{E(t)}_{L^2}\le  C_0 (1+ t)^{- \frac{5}{4} }.
\end{equation*}
Notice that for $p=1$, our decay rate of $(n,\theta)(t)$ is $(1+t)^{-13/4}$ in \eqref{n L^1}, and $u(t)$ is $(1+t)^{-5/4}$ in \eqref{p22}.
\end{Remark}

\noindent \textbf{Notations:} In this paper, we use $H^{s}(\mathbb{R}^{3}),s\in \mathbb{R}$ to denote the usual
Sobolev spaces with norm $\norm{\cdot}_{H^{s}}$ and $L^{p}(\mathbb{R}^{3}),1\leq p\leq
\infty $ to denote the usual $L^{p}$ spaces with norm $
\norm{\cdot}_{L^{p}}$.
$\na ^{\ell }$ with an
integer $\ell \geq 0$ stands for the usual any spatial derivatives of order $
\ell $. When $\ell <0$ or $\ell $ is not a positive integer, $\na ^{\ell }
$ stands for $\Lambda ^{\ell }$ defined by $
\Lambda^\ell f := \mathscr{F}^{-1} (|\xi|^\ell  \mathscr{F}{f}) $, where $\mathscr{F}$ is the usual Fourier transform operator and $\mathscr{F}^{-1}$ is its inverse. We use $\dot{H}
^{s}(\mathbb{R}^{3}),s\in \mathbb{R}$ to denote the homogeneous Sobolev
spaces on $\mathbb{R}^{3}$ with norm $\norm{\cdot}_{\dot{H}^{s}}$ defined by
$\norm{f}_{\dot{H}^s}:=\norm{\Lambda^s f}_{L^2}$. We then recall the homogeneous Besov spaces. Let
$\phi \in C^\infty_c(R^3_\xi)$ be such that $\phi(\xi) = 1$ when $|\xi| \le 1$ and $\phi(\xi) = 0$ when $|\xi| \ge 2$.  Let
$\varphi(\xi) = \phi(\xi) - \phi(2\xi)$ and $\varphi_j(\xi) = \varphi( 2^{-j}\xi)$
for ${j \in \mathbb {Z}}$. Then by the
construction,
$\sum_{j\in\mathbb {Z}}\varphi_j(\xi)=1$ if $\xi\neq 0.$
We define $\dot{\Delta}_j
f:= \mathscr{F}^{-1}(\varphi_j)* f$, then for $s\in \mathbb{R}$ and $1\le p,r\le \infty$, we define the homogeneous Besov spaces $\dot{B}_{p,r}^{s}(\r3)$ with norm $\norm{\cdot}_{\dot{B}_{p,r}^{s}}$ defined by
$$\|f\|_{\dot{B}_{p,r}^{s}}:=\Big(\sum\limits_{j\in\mathbb{Z}}2^{rsj}\|\dot{\Delta}_j f\|_{L^{p}}^{r}\Big)^{\frac1r}.$$Particularly, if $r=\infty$, then
\begin{equation*}
\|f\|_{\dot{B}_{p,\infty}^{s}}:=\sup\limits_{j\in\mathbb{Z}}2^{sj}\norm{\dot{\Delta}_j f}_{L^{p}}.
\end{equation*}

Throughout this paper, we let $C$  denote
some positive (generally large) universal constants and $\lambda$ denote  some positive (generally small) universal constants. They {\it do not} depend on either $k$ or $N$; otherwise, we will denote them by $C_k$, $C_{N}$, etc.
We will use $a \lesssim b$ if $a \le C b$, and $a\thicksim b$ means that $a\lesssim b$ and $b\lesssim a$.
We use $C_0$ to denote the constants depending on the initial data and $k,N,s$.
For simplicity, we write $\norm{(A,B)}_{X}:=\norm{A}_{X} +\norm{B}_{X}$ and $\int f:=\int_{\mathbb{R}^3}f\,dx.$ $(\ast)\times\varepsilon+(\ast\ast)$ denote that multiplying $(\ast)$ by a sufficiently small but fixed factor $\varepsilon$ and then adding it to $\ast\ast$.

The rest of our paper is structured as follows. In section \ref{section2}, we establish the refined energy estimates for the solution and derive the negative Sobolev and Besov estimates. Theorem \ref{existence} and Theorem \ref{decay} are proved in section \ref{section3}.

\section{Nonlinear energy estimates}\label{section2}
In this section, we will do the a priori estimate by assuming that $\norm{(n,\theta)(t)}_{H^3}\le \delta\ll 1$. Then by Sobolev's inequality, we have
\begin{equation}\label{n}
\frac12\le1+n,\ 1+\theta\le\frac32.
\end{equation}

\subsection{Preliminary}

In this subsection, we collect the analytic tools used later in the paper.

\begin{lemma}\label{A1}
Let $2\le p\le +\infty$ and $\alpha,m,\ell\ge 0$. Then we have
\begin{equation*}
\norm{\na^\alpha f}_{L^p}\le C_{p} \norm{ \na^mf}_{L^2}^{1-\theta}
\norm{ \na^\ell f}_{L^2}^{\theta}.
\end{equation*}
Here $0\le \theta\le 1$ (if $p=+\infty$, then we require that $0<\theta<1$) and $\alpha$ satisfies
\begin{equation*}
\alpha+3\left(\frac12-\frac{1}{p}\right)=m(1-\theta)+\ell\theta.
\end{equation*}
\end{lemma}
\begin{proof}
For the case $2\le p<+\infty$, we refer to Lemma A.1 in \cite{GW}; for the case $p=+\infty$, we refer to Exercise 6.1.2 in \cite{Gla} (pp. 421).
\end{proof}

We recall the following commutator estimate:
\begin{lemma}\label{commutator}
Let $k\ge 1$ be an integer and define the commutator
\begin{equation}\label{commuta}
\left[\na^k,g\right]h=\na^k(gh)-g\na^kh.
\end{equation}
Then we have
\begin{equation*}
\norm{\left[\na^k,g\right]h}_{L^2} \le C_k\left( \norm{\na g}_{L^\infty}
\norm{\na^{k-1}h}_{L^2}+\norm{\na^k g}_{L^2}\norm{ h}_{L^\infty}\right),
\end{equation*}and
\begin{equation*}
\norm{\na^k(gh)}_{L^2} \le C_k\left( \norm{g}_{L^\infty}
\norm{\na^{k}h}_{L^2}+\norm{\na^k g}_{L^2}\norm{ h}_{L^\infty}\right).
\end{equation*}
\end{lemma}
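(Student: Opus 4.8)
The final statement is Lemma \ref{commutator}, the Kato–Ponce type commutator and product estimates. Let me sketch a proof.

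=== PROOF PROPOSAL ===

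\emph{The plan is to} reduce both inequalities to the single Gagliardo–Nirenberg interpolation inequality already available as Lemma \ref{A1}, together with the Leibniz rule. Since the second estimate is in fact the special case of the commutator bound with the $g\na^k h$ term reinstated (note $\na^k(gh) = [\na^k,g]h + g\na^k h$ and $\norm{g\na^k h}_{L^2}\le \norm{g}_{L^\infty}\norm{\na^k h}_{L^2}$), it suffices to prove the commutator estimate and read off the product estimate as a byproduct. So I would focus on the first inequality.

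\emph{First I would} expand the commutator by the Leibniz formula. Writing $\na^k(gh)=\sum_{0\le\beta\le k}\binom{k}{\beta}\na^\beta g\,\na^{k-\beta}h$, the $\beta=0$ term is exactly $g\na^k h$ and cancels, leaving
\begin{equation*}
\left[\na^k,g\right]h=\sum_{1\le\beta\le k}\binom{k}{\beta}\na^\beta g\,\na^{k-\beta}h.
\end{equation*}
By the triangle inequality it is enough to bound each $\norm{\na^\beta g\,\na^{k-\beta}h}_{L^2}$ for $1\le\beta\le k$. \emph{Next I would} apply Hölder's inequality with a pair of exponents $(p_\beta,q_\beta)$ adapted to each term, $\tfrac{1}{p_\beta}+\tfrac{1}{q_\beta}=\tfrac12$, to split it as $\norm{\na^\beta g}_{L^{p_\beta}}\norm{\na^{k-\beta}h}_{L^{q_\beta}}$. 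The natural choice is the symmetric one suggested by scaling: distribute the derivatives so that $\na^\beta g$ is measured in $L^{2k/\beta}$ and $\na^{k-\beta}h$ in $L^{2k/(k-\beta)}$.

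\emph{Then I would} invoke Lemma \ref{A1} to interpolate each factor between the endpoints that appear in the statement. For the $g$-factor, I interpolate $\na^\beta g$ between $\na g\in L^\infty$ and $\na^k g\in L^2$; for the $h$-factor, I interpolate $\na^{k-\beta}h$ between $h\in L^\infty$ and $\na^{k-1}h\in L^2$. A direct check of the dimensional balance condition $\alpha+3(\tfrac12-\tfrac1p)=m(1-\theta)+\ell\theta$ in Lemma \ref{A1} fixes the interpolation parameter $\theta$ in each case and yields
\begin{equation*}
\norm{\na^\beta g}_{L^{2k/\beta}}\lesssim\norm{\na g}_{L^\infty}^{1-\frac{\beta-1}{k-1}}\norm{\na^k g}_{L^2}^{\frac{\beta-1}{k-1}},\quad
\norm{\na^{k-\beta}h}_{L^{2k/(k-\beta)}}\lesssim\norm{h}_{L^\infty}^{\frac{\beta}{k}}\norm{\na^{k-1}h}_{L^2}^{\frac{k-\beta}{k}},
\end{equation*}
(with the endpoint cases $\beta=1$ and $\beta=k$ handled directly, since there one factor needs no interpolation). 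Multiplying these and summing over $\beta$, every resulting product of the four quantities $\norm{\na g}_{L^\infty}$, $\norm{\na^k g}_{L^2}$, $\norm{h}_{L^\infty}$, $\norm{\na^{k-1}h}_{L^2}$ is, by Young's inequality applied to the conjugate exponents produced by the interpolation, dominated by the two ``pure'' products $\norm{\na g}_{L^\infty}\norm{\na^{k-1}h}_{L^2}$ and $\norm{\na^k g}_{L^2}\norm{h}_{L^\infty}$, which is the claimed bound.

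\emph{The hard part} is purely bookkeeping rather than conceptual: verifying that for every intermediate $\beta$ the two interpolation exponents are genuinely admissible in Lemma \ref{A1} (in particular that the relevant $\theta$ lands in $[0,1]$ and, when an $L^\infty$ endpoint is used, strictly inside $(0,1)$), and then checking that the mixed products collapse correctly onto the two extreme products under Young's inequality. This is the standard Kato–Ponce argument, so I expect no genuine obstruction — the only care needed is the consistent choice of Hölder exponents $2k/\beta$ and $2k/(k-\beta)$ so that the interpolation weights on $g$ and on $h$ add up to exponents summing to $1$ on each of the four fixed norms.
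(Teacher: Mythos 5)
Your overall strategy is exactly the one the paper intends: the paper gives no proof of its own but points to Lemma 3.4 of Majda--Bertozzi \cite{MB}, and that proof is precisely your Leibniz--H\"older--Gagliardo--Nirenberg--Young scheme, including the observation that the product estimate follows from the commutator estimate by reinstating $g\na^k h$. So the route is right; the issue is that the exponents you actually wrote down for the commutator part do not satisfy the dimensional balance you invoke. With $p=2k/(k-\beta)$ the claimed bound $\norm{\na^{k-\beta}h}_{L^{2k/(k-\beta)}}\lesssim\norm{h}_{L^\infty}^{\beta/k}\norm{\na^{k-1}h}_{L^2}^{(k-\beta)/k}$ fails under the scaling $h\mapsto h(\lambda x)$ (the two sides scale as $\lambda^{(k-\beta)(2k-3)/(2k)}$ and $\lambda^{(k-\beta)(2k-5)/(2k)}$ respectively), and likewise for the $g$-factor. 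The exponents $2k/\beta$, $2k/(k-\beta)$ are the correct ones for the \emph{product} estimate, where one interpolates between $g\in L^\infty$ and $\na^k g\in L^2$; for the \emph{commutator} estimate, where the endpoints are $\na g\in L^\infty$, $\na^k g\in L^2$, $h\in L^\infty$, $\na^{k-1}h\in L^2$, the scaling-consistent choice is to put $\na^\beta g=\na^{\beta-1}(\na g)$ in $L^{2(k-1)/(\beta-1)}$ and $\na^{k-\beta}h$ in $L^{2(k-1)/(k-\beta)}$ (these are H\"older-conjugate to $L^2$), which yields
\begin{equation*}
\norm{\na^\beta g\,\na^{k-\beta}h}_{L^2}\lesssim\left(\norm{\na g}_{L^\infty}\norm{\na^{k-1}h}_{L^2}\right)^{\frac{k-\beta}{k-1}}\left(\norm{\na^k g}_{L^2}\norm{h}_{L^\infty}\right)^{\frac{\beta-1}{k-1}},
\end{equation*}
and Young's inequality finishes. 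One further small point: Lemma \ref{A1} as stated in the paper interpolates only between two $L^2$-based norms, so it does not literally supply the inequalities above, which have an $L^\infty$ endpoint; you need the general Gagliardo--Nirenberg inequality with an $L^\infty$ endpoint, which is what \cite{MB} actually uses. Neither correction changes the architecture of your argument, but as written the key displayed interpolation inequalities are false.
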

\begin{proof}
It can be proved by using Lemma \ref{A1}, see Lemma 3.4 in \cite{MB} (pp. 98) for instance.
\end{proof}
Notice that when using the commutator estimate in this paper, we usually will not consider the case that $k=0$ since it is trivial.
\begin{lemma}\label{A2}
If the function $f(n)$ satisfies
\begin{equation*}
f(n)\sim n\hbox{ and } \norms{f^{(k)}(n)} \le C_k\hbox{ for any }k\ge 1,
\end{equation*}
then for any integer $k\ge0$, we have
\begin{equation*}
\norm{\na^kf(n)}_{L^2}\le C_k\norm{\na^kn}_{L^2}.
\end{equation*}
\end{lemma}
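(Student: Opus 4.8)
The plan is to reduce the composition $f(n)$ to a finite sum of products of spatial derivatives of $n$ through the higher-order chain rule, and then to estimate each product by H\"older's inequality combined with the Gagliardo--Nirenberg interpolation of Lemma \ref{A1}. The case $k=0$ is immediate, since $f(n)\sim n$ gives $\norms{f(n)}\lesssim\norms{n}$ pointwise and hence $\norm{f(n)}_{L^2}\lesssim\norm{n}_{L^2}$. For $k\ge 1$ I would start from the Fa\`a di Bruno expansion
\[
\na^k f(n)=\sum_{j=1}^{k}f^{(j)}(n)\sum_{\substack{a_1+\cdots+a_j=k\\ a_i\ge 1}}c_{a_1,\dots,a_j}\,\na^{a_1}n\cdots\na^{a_j}n,
\]
which is a finite sum, the constants $c_{a_1,\dots,a_j}$ depending only on $k$. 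Because $\norms{f^{(j)}(n)}\le C_j$ for every $j\ge 1$, the scalar prefactors $f^{(j)}(n)$ are bounded uniformly in $x$, so the estimate reduces to controlling $\norm{\na^{a_1}n\cdots\na^{a_j}n}_{L^2}$ for each multi-index with $a_1+\cdots+a_j=k$, $a_i\ge 1$, $1\le j\le k$.

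For such a product I would apply H\"older's inequality with the exponents $p_i=2k/a_i$, so that $\sum_i 1/p_i=\sum_i a_i/(2k)=1/2$, obtaining $\norm{\na^{a_1}n\cdots\na^{a_j}n}_{L^2}\le\prod_i\norm{\na^{a_i}n}_{L^{p_i}}$. Each factor is interpolated by Lemma \ref{A1} between $\norm{n}_{L^2}$ (the choice $m=0$) and $\norm{\na^k n}_{L^2}$ (the choice $\ell=k$), which for $k\ge 2$ gives $\norm{\na^{a_i}n}_{L^{p_i}}\lesssim\norm{n}_{L^2}^{1-\theta_i}\norm{\na^k n}_{L^2}^{\theta_i}$ with $k\theta_i=a_i+\tfrac32-\tfrac{3a_i}{2k}$. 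Taking the product and summing over $i$ (using $\sum a_i=k$ and that there are $j$ factors) produces $\norm{n}_{L^2}^{a}\norm{\na^k n}_{L^2}^{b}$ with $b=1+\tfrac{3(j-1)}{2k}$ and $a+b=j$; invoking once more the interpolation $\norm{n}_{L^\infty}\lesssim\norm{n}_{L^2}^{1-3/(2k)}\norm{\na^k n}_{L^2}^{3/(2k)}$ lets me reassemble the surplus powers into exactly $\norm{n}_{L^\infty}^{j-1}$ and conclude
\[
\norm{\na^{a_1}n\cdots\na^{a_j}n}_{L^2}\lesssim\norm{n}_{L^\infty}^{\,j-1}\norm{\na^k n}_{L^2}.
\]
Under the a priori bound \eqref{n} the quantity $\norm{n}_{L^\infty}$ is bounded (indeed $\lesssim\delta\le1$), so the factor $\norm{n}_{L^\infty}^{j-1}$ is harmless, and summing over the finitely many multi-indices yields $\norm{\na^k f(n)}_{L^2}\le C_k\norm{\na^k n}_{L^2}$. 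The case $k=1$ needs no interpolation: only $j=1$ occurs, and $\norm{f'(n)\na n}_{L^2}\le C_1\norm{\na n}_{L^2}$ directly.

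The principal term in the expansion is the single-factor contribution $j=1$, $a_1=k$, namely $f'(n)\na^k n$, which is bounded by $C_1\norm{\na^k n}_{L^2}$; all remaining terms have $j\ge2$ and are genuinely nonlinear. I expect the main technical point to be the exponent bookkeeping in the product estimate: one must verify that the Gagliardo--Nirenberg exponents $\theta_i$ sum so that, after folding the excess powers of $\norm{n}_{L^2}$ and $\norm{\na^k n}_{L^2}$ back into $\norm{n}_{L^\infty}^{j-1}$, the top-order norm $\norm{\na^k n}_{L^2}$ survives only to the first power. The boundedness of $\norm{n}_{L^\infty}$ supplied by \eqref{n} is precisely what renders these nonlinear contributions controllable.
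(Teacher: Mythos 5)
Your overall skeleton --- Fa\`a di Bruno to reduce $\na^k f(n)$ to sums of products $f^{(j)}(n)\na^{a_1}n\cdots\na^{a_j}n$ with $a_1+\cdots+a_j=k$, then H\"older with $p_i=2k/a_i$ and Gagliardo--Nirenberg on each factor --- is the right one, and it is essentially what the cited Lemma 2.2 of \cite{TWW} does (the paper itself gives no proof, only that reference). But there is a genuine error in your exponent recombination. After interpolating each factor between $\norm{n}_{L^2}$ and $\norm{\na^k n}_{L^2}$ you correctly arrive at $\norm{n}_{L^2}^{a}\norm{\na^k n}_{L^2}^{b}$ with $b=1+\tfrac{3(j-1)}{2k}>1$ for $j\ge2$, and you then claim that the inequality $\norm{n}_{L^\infty}\lesssim\norm{n}_{L^2}^{1-3/(2k)}\norm{\na^k n}_{L^2}^{3/(2k)}$ lets you replace the surplus $\bigl(\norm{n}_{L^2}^{1-3/(2k)}\norm{\na^k n}_{L^2}^{3/(2k)}\bigr)^{j-1}$ by $\norm{n}_{L^\infty}^{j-1}$. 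That inequality goes the wrong way: it bounds $\norm{n}_{L^\infty}$ \emph{by} the product, not the product by $\norm{n}_{L^\infty}$. What you actually have is a bound with $\norm{\na^k n}_{L^2}$ raised to a power strictly greater than $1$, and in this paper that cannot be absorbed into $C_k\norm{\na^k n}_{L^2}$: the a priori smallness \eqref{n} controls only $\norm{(n,\theta)}_{H^3}$, while the lemma is applied for arbitrary $k\le N$ precisely in order to \emph{prove} the bound on $\norm{\na^k n}_{L^2}$, so no control on $\norm{\na^k n}_{L^2}^{b-1}$ is available at that stage.

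The fix is to interpolate each factor with the $L^\infty$ endpoint from the outset, i.e.\ use the Moser-type Gagliardo--Nirenberg inequality
\begin{equation*}
\norm{\na^{a_i}n}_{L^{2k/a_i}}\lesssim \norm{n}_{L^\infty}^{1-a_i/k}\,\norm{\na^k n}_{L^2}^{a_i/k},
\end{equation*}
(this is the version underlying Lemma \ref{commutator}, cf.\ Lemma 3.4 of \cite{MB}; note it is not literally an instance of Lemma \ref{A1}, which only has $L^2$-based endpoints). Multiplying over $i$ and using $\sum_i a_i=k$ gives directly
\begin{equation*}
\norm{\na^{a_1}n\cdots\na^{a_j}n}_{L^2}\lesssim \norm{n}_{L^\infty}^{\,j-1}\norm{\na^k n}_{L^2},
\end{equation*}
with the top-order norm appearing exactly to the first power, and then $\norm{n}_{L^\infty}\le 1/2$ from \eqref{n} finishes the argument. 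With that substitution the rest of your write-up (the $k=0$ and $k=1$ cases, the boundedness of $f^{(j)}(n)$, the finiteness of the sum) is fine.
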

\begin{proof}
See Lemma 2.2 in \cite{TWW}.
\end{proof}

We have the $L^p$ embeddings:
\begin{lemma}\label{Riesz lemma}
Let $0\le s<3/2,\ 1<p\le 2$ with $1/2+s/3=1/p$, then
\begin{equation*}
\norm{ f}_{\dot{H}^{-s}}\lesssim\norm{ f}_{L^p}.
\end{equation*}
\end{lemma}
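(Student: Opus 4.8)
The plan is to recognize the statement as a special case of the Hardy--Littlewood--Sobolev inequality, after transferring the negative derivative onto a Riesz potential. First I would use Plancherel's theorem together with the definitions from the Notations section: by definition $\norm{f}_{\dot{H}^{-s}} = \norm{\Lambda^{-s}f}_{L^2}$, where $\Lambda^{-s}f = \mathscr{F}^{-1}(|\xi|^{-s}\mathscr{F}f)$. The key observation is that the Fourier multiplier $|\xi|^{-s}$ is, up to a dimensional constant, exactly the symbol of the Riesz potential $I_s$ of order $s$ on $\r3$, whose convolution kernel is $c_s|x|^{s-3}$; that is, $\Lambda^{-s}f = I_s f$ as tempered distributions. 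Hence $\norm{f}_{\dot{H}^{-s}} = \norm{I_s f}_{L^2}$, which recasts the claim entirely in terms of mapping properties of $I_s$.

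The core step is then to invoke the Hardy--Littlewood--Sobolev inequality for the Riesz potential on $\r3$, in the form $\norm{I_s f}_{L^q}\lesssim \norm{f}_{L^p}$, valid whenever $1<p<q<\infty$ and $\tfrac1q = \tfrac1p - \tfrac{s}{3}$. Choosing the target exponent $q=2$ forces $\tfrac1p = \tfrac12 + \tfrac{s}{3}$, which is precisely the relation between $p$ and $s$ assumed in the statement. The admissibility condition $1<p<q=2$ translates into $0<s<3/2$, matching the stated range; in the degenerate case $s=0$ one has $p=2$ and the assertion reduces to the Plancherel identity $\norm{f}_{\dot{H}^{0}} = \norm{f}_{L^2}$.

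The substantive content lies entirely in the Hardy--Littlewood--Sobolev inequality, so the proof is essentially a citation together with bookkeeping of the exponents rather than a genuine computation. The only point requiring a little care is the identification $\Lambda^{-s} = I_s$, which rests on the distributional Fourier transform of $|x|^{s-3}$ being a constant multiple of $|\xi|^{-s}$ for $0<s<3$; this is standard, and our range $0<s<3/2$ sits comfortably inside it. I would therefore reduce the lemma to $\norm{I_s f}_{L^2}\lesssim \norm{f}_{L^p}$, cite the HLS inequality (for instance from Stein's \emph{Singular Integrals} or Bergh--L\"ofstr\"om), and conclude directly. I do not anticipate any real obstacle beyond correctly reading off the scaling relation $\tfrac1p - \tfrac12 = \tfrac{s}{3}$ and verifying the endpoint behavior at $s=0$.
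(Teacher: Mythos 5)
Your proposal is correct and follows essentially the same route as the paper, which proves the lemma by directly citing the Hardy--Littlewood--Sobolev theorem (from Grafakos); your write-up merely makes explicit the identification $\Lambda^{-s}=I_s$, the exponent bookkeeping $\tfrac1q=\tfrac1p-\tfrac{s}{3}$ with $q=2$, and the trivial endpoint $s=0$. No gaps.
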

\begin{proof}
It follows from the Hardy-Littlewood-Sobolev theorem, see \cite{Gla}.
\end{proof}

\begin{lemma}\label{Lp embedding}
Let $0< s\le 3/2,\ 1\le p<2$ with $1/2+s/3=1/p$, then
\begin{equation*}
\norm{f}_{\dot{B}_{2,\infty}^{-s}}\lesssim\norm{f}_{L^p}.
\end{equation*}
\end{lemma}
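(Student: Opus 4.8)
The plan is to combine Bernstein's inequality with the uniform $L^p$-boundedness of the Littlewood--Paley projections $\dot{\Delta}_j$, after which the scaling hypothesis $1/2+s/3=1/p$ forces every frequency-dependent factor to cancel. This is the natural Besov analogue of the Hardy--Littlewood--Sobolev argument used for Lemma \ref{Riesz lemma}, but it is in fact more elementary because the $\dot{B}_{2,\infty}^{-s}$ norm is a supremum rather than a sum over dyadic blocks.

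First I would record that $\dot{\Delta}_j$ is bounded on $L^p$ uniformly in $j$. Since $\dot{\Delta}_j f = \mathscr{F}^{-1}(\varphi_j)* f$ and the scaling $\varphi_j(\xi)=\varphi(2^{-j}\xi)$ gives $\mathscr{F}^{-1}(\varphi_j)(x)=2^{3j}\mathscr{F}^{-1}(\varphi)(2^j x)$, a change of variables shows $\norm{\mathscr{F}^{-1}(\varphi_j)}_{L^1}=\norm{\mathscr{F}^{-1}(\varphi)}_{L^1}$ is independent of $j$. Young's convolution inequality then yields $\norm{\dot{\Delta}_j f}_{L^p}\le \norm{\mathscr{F}^{-1}(\varphi)}_{L^1}\norm{f}_{L^p}$ for every $j\in\mathbb{Z}$. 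Next I would invoke Bernstein's inequality: because $\widehat{\dot{\Delta}_j f}=\varphi_j\hat f$ is supported in the annulus $\{|\xi|\sim 2^j\}$, for $1\le p\le 2$ one has $\norm{\dot{\Delta}_j f}_{L^2}\lesssim 2^{3j(1/p-1/2)}\norm{\dot{\Delta}_j f}_{L^p}$. Chaining the two estimates gives $\norm{\dot{\Delta}_j f}_{L^2}\lesssim 2^{3j(1/p-1/2)}\norm{f}_{L^p}$ with an implied constant independent of $j$.

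Finally, the hypothesis $1/2+s/3=1/p$ is exactly $3(1/p-1/2)=s$, so $2^{-sj}\cdot 2^{3j(1/p-1/2)}=1$ and therefore $2^{-sj}\norm{\dot{\Delta}_j f}_{L^2}\lesssim \norm{f}_{L^p}$ uniformly in $j$. Taking the supremum over $j\in\mathbb{Z}$ and recalling $\norm{f}_{\dot{B}_{2,\infty}^{-s}}=\sup_{j}2^{-sj}\norm{\dot{\Delta}_j f}_{L^2}$ yields the claim. I do not expect any genuine obstacle here; the only point deserving care is that, since the $\dot{B}_{2,\infty}^{-s}$ norm is an $\ell^\infty$ supremum over the dyadic index rather than an $\ell^2$-type sum, the borderline case $p=1,\ s=3/2$ is admissible, in contrast to the Riesz-potential argument for $\dot{H}^{-s}$ in Lemma \ref{Riesz lemma}, which must exclude it.
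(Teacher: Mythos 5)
Your proof is correct: the uniform $L^1$ bound on $\mathscr{F}^{-1}(\varphi_j)$ via scaling, Young's inequality, and Bernstein's inequality on the annulus $|\xi|\sim 2^j$ combine exactly as you say, and the hypothesis $1/2+s/3=1/p$ makes the dyadic factors cancel before taking the supremum. The paper itself gives no argument and simply cites Lemma 4.6 of \cite{SS}; your Bernstein--Young computation is the standard proof of that cited result, including the correct observation that the $\ell^\infty$ structure of $\dot{B}_{2,\infty}^{-s}$ admits the endpoint $p=1$, $s=3/2$.
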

\begin{proof}
See Lemma 4.6 in \cite{SS}.
\end{proof}

It is important to use the following special interpolation estimates:
\begin{lemma}\label{1-sinte}
Let $s\ge 0$ and $\ell\ge 0$, then we have
\begin{equation*}
\norm{\na^\ell f}_{L^2}\le \norm{\na^{\ell+1} f}_{L^2}^{1-\theta}%
\norm{ f}_{\dot{H}^{-s}}^\theta, \hbox{ where }\theta=\frac{1}{\ell+1+s}.
\end{equation*}
\end{lemma}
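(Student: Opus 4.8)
The plan is to transfer the whole estimate to the Fourier side, where it collapses to a single application of H\"older's inequality. Recalling the paper's convention $\norm{\na^\ell f}_{L^2}=\norm{\Lambda^\ell f}_{L^2}$ together with Plancherel's theorem, I would first record the three identities
\begin{equation*}
\norm{\na^\ell f}_{L^2}^2=\int|\xi|^{2\ell}|\mathscr{F}f|^2\,d\xi,\quad\norm{\na^{\ell+1}f}_{L^2}^2=\int|\xi|^{2(\ell+1)}|\mathscr{F}f|^2\,d\xi,\quad\norm{f}_{\dot H^{-s}}^2=\int|\xi|^{-2s}|\mathscr{F}f|^2\,d\xi.
\end{equation*}
Thus it suffices to dominate $\int|\xi|^{2\ell}|\mathscr{F}f|^2$ by the product of the other two integrals raised to the powers $1-\theta$ and $\theta$.

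The key algebraic step is the pointwise factorization of the integrand,
\begin{equation*}
|\xi|^{2\ell}|\mathscr{F}f|^2=\Big(|\xi|^{2(\ell+1)}|\mathscr{F}f|^2\Big)^{1-\theta}\Big(|\xi|^{-2s}|\mathscr{F}f|^2\Big)^{\theta}.
\end{equation*}
Matching the exponent of $|\mathscr{F}f|^2$ demands only $(1-\theta)+\theta=1$, which holds automatically, while matching the exponent of $|\xi|$ demands $2(\ell+1)(1-\theta)-2s\theta=2\ell$, i.e. $1-\theta(\ell+1+s)=0$; this forces precisely $\theta=\frac{1}{\ell+1+s}$, the value in the statement. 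Because $s\ge0$ and $\ell\ge0$ give $\ell+1+s\ge1$, we have $\theta\in(0,1]$, so $\tfrac{1}{1-\theta}$ and $\tfrac1\theta$ are admissible conjugate H\"older exponents.

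With this in hand I would apply H\"older's inequality with exponents $\tfrac{1}{1-\theta}$ and $\tfrac1\theta$ to get
\begin{equation*}
\int|\xi|^{2\ell}|\mathscr{F}f|^2\,d\xi\le\Big(\int|\xi|^{2(\ell+1)}|\mathscr{F}f|^2\,d\xi\Big)^{1-\theta}\Big(\int|\xi|^{-2s}|\mathscr{F}f|^2\,d\xi\Big)^{\theta}=\norm{\na^{\ell+1}f}_{L^2}^{2(1-\theta)}\norm{f}_{\dot H^{-s}}^{2\theta},
\end{equation*}
and taking square roots gives the claim. There is no substantive obstacle beyond this exponent bookkeeping; the only edge case is $\theta=1$, occurring exactly when $\ell=s=0$, where the first factor carries exponent $0$ and the inequality degenerates to the trivial identity $\norm{f}_{L^2}\le\norm{f}_{L^2}$.
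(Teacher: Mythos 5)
Your proof is correct and is exactly the argument the paper intends: its own proof is the one-line remark that the lemma ``follows directly by the Parseval theorem and H\"older's inequality,'' and your Fourier-side factorization with exponents $\tfrac{1}{1-\theta}$ and $\tfrac{1}{\theta}$ is precisely that computation written out, with the exponent bookkeeping and the $\theta=1$ edge case handled properly.
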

\begin{proof}
It follows directly by the Parseval theorem  and H\"older's
inequality.
\end{proof}

\begin{lemma}\label{Besov interpolation}
Let $s> 0$ and $\ell\ge 0$, then we have
\begin{equation*}
\norm{\na^\ell f}_{L^2}\le \norm{\na^{\ell+1} f}_{L^2}^{1-\theta}%
\norm{ f}_{\dot{B}^{-s}_{2,\infty}}^\theta, \hbox{ where }\theta=\frac{1}{\ell+1+s}.
\end{equation*}
\end{lemma}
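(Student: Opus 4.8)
The plan is to establish this dyadic interpolation inequality by a Littlewood--Paley decomposition, in the same spirit as Lemma \ref{1-sinte} but replacing the single continuous frequency cut-off by a splitting into dyadic blocks (which is forced on us because the relevant norm is now the \emph{Besov} norm $\norm{\cdot}_{\dot B^{-s}_{2,\infty}}$, built from the blocks $\dot\Delta_j$). First I would record the almost-orthogonality characterization of the homogeneous $L^2$-based norm,
\begin{equation*}
\norm{\na^\ell f}_{L^2}^2\thicksim\sum_{j\in\Z}2^{2\ell j}\norm{\dot{\Delta}_j f}_{L^2}^2 ,
\end{equation*}
which follows from Parseval's identity and the fact that $\dot\Delta_j$ localizes the frequency to the annulus $|\xi|\thicksim 2^j$, so that $\norm{\na^\ell\dot\Delta_j f}_{L^2}\thicksim 2^{\ell j}\norm{\dot\Delta_j f}_{L^2}$. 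Fixing an integer $J$ to be optimized later, I would split the sum into the low-frequency range $j\le J$ and the high-frequency range $j>J$.

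For the low frequencies, the very definition of the Besov norm yields $\norm{\dot\Delta_j f}_{L^2}\le 2^{sj}\norm{f}_{\dot B^{-s}_{2,\infty}}$, whence
\begin{equation*}
\sum_{j\le J}2^{2\ell j}\norm{\dot\Delta_j f}_{L^2}^2\le\norm{f}_{\dot B^{-s}_{2,\infty}}^2\sum_{j\le J}2^{2(\ell+s)j}\lesssim 2^{2(\ell+s)J}\norm{f}_{\dot B^{-s}_{2,\infty}}^2 ,
\end{equation*}
the geometric series being summable precisely because $\ell+s>0$. For the high frequencies I would factor out one derivative and use $2^{-2j}\le 2^{-2J}$ for $j>J$, obtaining
\begin{equation*}
\sum_{j>J}2^{2\ell j}\norm{\dot\Delta_j f}_{L^2}^2=\sum_{j>J}2^{-2j}\,2^{2(\ell+1)j}\norm{\dot\Delta_j f}_{L^2}^2\le 2^{-2J}\norm{\na^{\ell+1}f}_{L^2}^2 .
\end{equation*}

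Adding the two estimates gives the basic bound
\begin{equation*}
\norm{\na^\ell f}_{L^2}^2\lesssim 2^{2(\ell+s)J}\norm{f}_{\dot B^{-s}_{2,\infty}}^2+2^{-2J}\norm{\na^{\ell+1}f}_{L^2}^2 ,
\end{equation*}
and the final step is to balance the two terms in $J$. Choosing $J$ to be the integer nearest the value determined by $2^{2(\ell+1+s)J}=\norm{\na^{\ell+1}f}_{L^2}^2/\norm{f}_{\dot B^{-s}_{2,\infty}}^2$ renders the two contributions comparable; computing the resulting exponents produces exactly $\theta=1/(\ell+1+s)$ on the Besov factor and $1-\theta=(\ell+s)/(\ell+1+s)$ on the $\na^{\ell+1}f$ factor, which is the claimed inequality (up to the universal constant hidden in the paper's use of $\le$, consistent with Lemma 4.6 of \cite{SS}).

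I expect the genuinely essential point — rather than an obstacle in any deep sense — to be the requirement $\ell+s>0$, which is exactly what the hypothesis $s>0$ (together with $\ell\ge0$) guarantees and which is indispensable for the convergence of the low-frequency geometric series. This is precisely the reason the Besov statement cannot admit the endpoint $s=0$ uniformly in $\ell$, in contrast with the $\dot H^{-s}$ version of Lemma \ref{1-sinte}, where the Fourier-side Hölder argument still degenerates gracefully at $s=0$. The only other technical care needed is the rounding of the optimal $J$ to an integer, which costs no more than a harmless multiplicative factor; everything else is bookkeeping with the dyadic sums.
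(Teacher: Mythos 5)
Your argument is correct. The paper itself gives no proof of this lemma---it simply cites Lemma 4.5 of \cite{SS}---and your dyadic decomposition (almost-orthogonality of the $\dot\Delta_j$, the bound $\norm{\dot\Delta_j f}_{L^2}\le 2^{sj}\norm{f}_{\dot B^{-s}_{2,\infty}}$ on low frequencies, the factor $2^{-2j}$ on high frequencies, and optimization over the splitting index $J$) is precisely the standard proof of that cited result, with the exponent bookkeeping coming out right. Your two caveats are also well placed: the inequality genuinely holds only up to a multiplicative constant (unlike the Fourier--H\"older proof of Lemma \ref{1-sinte}, which gives constant $1$), and the convergence of the low-frequency geometric series is exactly where $s>0$ (more precisely $\ell+s>0$) is used, which is why the Besov version excludes $s=0$ at $\ell=0$.
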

\begin{proof}
See Lemma 4.5 in \cite{SS}.
\end{proof}

\subsection{Energy estimates}
In this subsection, we will derive the basic energy estimates for the solution to the Euler-Maxwell system \eqref{yi}. We begin with the standard energy estimates.
\begin{lemma}\label{u}
For any integer $k\ge0$, we have
\begin{eqnarray}\label{energy 1}
&&\frac{d}{dt}\sum_{l=k}^{k+2}\norm{\na^{l} ( n, u, \theta, E, B )}_{L^2}^2 +\la\sum_{l=k}^{k+2}\norm{\na^{l} (u,\theta)}_{L^2}^2\nonumber \\
&&\quad\lesssim C_kF(n,u,\theta,B)
\left(\sum_{l=k}^{k+2}\norm{\na^{l} (n, u,\theta )}_{L^2}^2+\sum_{l=k}^{k+1}\norm{\na^{l}E}_{L^2}^2+\norm{\na^{k+1} B}_{L^2}^2\right)\nonumber
\\&&\qquad+\norm{(n,u)}_{L^\infty}\norm{\na^{k+2} (n, u )}_{L^2} \norm{\na^{k+2} ( E,  B )}_{L^2},
\end{eqnarray}
where $F(n,u,\theta,B)$ is defined by
$$F(n,u,\theta,B):=\norm{ (n, u,\theta)}_{H^{\frac{k}{2}+2}\cap H^3}+\norm{ (n, u,\theta)}_{H^3}^2+\norm{\na B}_{L^2}.$$
\end{lemma}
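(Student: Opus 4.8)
The plan is to perform the standard $L^2$ energy estimate on the reformulated system \eqref{yi} by applying $\na^l$ for each $l$ with $k\le l\le k+2$, pairing each equation with the natural quantity, and summing. First I would apply $\na^l$ to the equations for $n,u,\theta,E,B$, multiply the resulting equations by $\na^l n$, $\na^l u$, $\frac32\na^l\theta$, $\na^l E$, $\na^l B$ respectively (the factor $\frac32$ on $\theta$ being chosen to symmetrize the coupling between $\na\theta$ in the $u$-equation and $\frac23\,\text{div}\,u$ in the $\theta$-equation), integrate over $\R^3$, and add. The key cancellations to exploit are the skew-symmetric structure of the Maxwell part: the terms $\na\times B$ and $\na\times E$ cancel against each other after integration by parts, and the coupling $E$ in the $u$-equation pairs with the source $u$ in the $E$-equation. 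After these cancellations, the linear damping terms $u+u$ (i.e. the relaxation $|u|$) and $\theta$ produce the good dissipation $\la\sum_{l=k}^{k+2}\norm{\na^l(u,\theta)}_{L^2}^2$ on the left-hand side, which is exactly what \eqref{n} guarantees via the positive lower bounds on $1+n$ and $1+\theta$.

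Next I would treat the nonlinear right-hand side terms. These are of three types: transport terms like $u\cdot\na n$, $u\cdot\na u$, $u\cdot\na\theta$; the pressure-type term $\frac{1+\theta}{1+n}\na n$; the quadratic relaxation $\frac13|u|^2$ and $\frac23\theta\,\text{div}\,u$; the Lorentz term $u\times B$; and the current term $nu$. For $l\le k+1$ I would estimate these using the commutator and product estimates of Lemma \ref{commutator} together with the Gagliardo–Nirenberg interpolation of Lemma \ref{A1}, bounding everything by $F(n,u,\theta,B)$ times the bracketed dissipation-type quantity; here Lemma \ref{A2} handles the composite function $\frac{1+\theta}{1+n}$ coming from \eqref{n}. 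The careful point is that these estimates must avoid placing $k+2$ derivatives on the electromagnetic field $(E,B)$ inside the low-order terms, so that the $F$-factor only multiplies the displayed combination and no $\norm{\na^{k+2}(E,B)}_{L^2}^2$ term appears with an $F$-coefficient — this is where the regularity-loss structure forces us to isolate the top-order magnetic coupling.

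The main obstacle, and the reason for the unusual last term on the right of \eqref{energy 1}, is the top-order estimate $l=k+2$ of the Lorentz force $u\times B$ and the current $nu$. When all $k+2$ derivatives are distributed, the worst contributions are $\int(\na^{k+2}(u\times B))\cdot\na^{k+2}u$ and $\int\na^{k+2}(nu)\cdot\na^{k+2}E$, where after using the Leibniz rule the dangerous pieces put $k+2$ derivatives on $B$ or on $E$ and zero derivatives on $u$ or $n$; these cannot be absorbed into the dissipation because $\mathcal{D}$ controls $\na^{k+2}(E,B)$ with a \emph{loss} of regularity. The strategy is precisely to keep these terms as the separate factor $\norm{(n,u)}_{L^\infty}\norm{\na^{k+2}(n,u)}_{L^2}\norm{\na^{k+2}(E,B)}_{L^2}$ rather than trying to close them, deferring their treatment to the interpolation argument in the later decay analysis. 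I would therefore split each top-order nonlinear integral via the commutator identity \eqref{commuta}: the commutator part $[\na^{k+2},\,\cdot\,]$ is estimated by Lemma \ref{commutator} and Lemma \ref{A1} and absorbed into the $F$-bracket, while the single uncommuted piece $g\,\na^{k+2}h$ with $g\in\{n,u\}$ placed in $L^\infty$ yields exactly the retained product term. Finally, summing over $l=k,k+1,k+2$ and collecting the $H^{\frac k2+2}\cap H^3$, $H^3$, and $\norm{\na B}_{L^2}$ contributions into the definition of $F(n,u,\theta,B)$ gives \eqref{energy 1}.
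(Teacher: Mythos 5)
Your overall architecture (apply $\na^l$ for $l=k,\dots,k+2$, exploit the Maxwell cancellations, estimate nonlinearities by the commutator and interpolation lemmas, and deliberately leave the top-order pieces of $u\times B$ and $nu$ as the unabsorbed factor $\norm{(n,u)}_{L^\infty}\norm{\na^{k+2}(n,u)}_{L^2}\norm{\na^{k+2}(E,B)}_{L^2}$) is the same as the paper's. But there is one genuine gap: your choice of multipliers. You pair the $n$-, $u$-, $\theta$-equations with the \emph{plain} quantities $\na^l n$, $\na^l u$, $\tfrac32\na^l\theta$, whereas the paper uses the weighted multipliers $\tfrac{1+\theta}{1+n}\na^l n$, $(1+n)\na^l u$, $\tfrac32\tfrac{1+n}{1+\theta}\na^l\theta$. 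This is not cosmetic: it is exactly the symmetrization of the non-isentropic system that the authors single out in the introduction as one of the two main difficulties relative to the isentropic case.

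To see why your version fails, look at the $l=k+2$ coupling between $-(1+n)\,{\rm div}\,u$ in the $n$-equation and $-\tfrac{1+\theta}{1+n}\na n$ in the $u$-equation. With the plain multipliers the two top-order contributions carry the \emph{different} coefficients $(1+n)$ and $\tfrac{1+\theta}{1+n}$, so after one integration by parts they do not combine into a divergence $\operatorname{div}(\na^{k+2}u\,\na^{k+2}n)$; what survives is
\begin{equation*}
\int \Bigl[\tfrac{1+\theta}{1+n}-(1+n)\Bigr]\,\na^{k+2}n\,\na^{k+2}{\rm div}\,u ,
\end{equation*}
which contains $k+3$ derivatives of $u$ (or, after another integration by parts, $k+3$ derivatives of $n$). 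No term on the right-hand side of \eqref{energy 1} controls a $(k+3)$-rd derivative, and the smallness of the coefficient does not help, since the issue is derivative count, not size. The same defect occurs in the $\na\theta$ versus $\tfrac23(1+\theta){\rm div}\,u$ coupling, where your choice leaves $-\int\theta\,\na^{k+2}\theta\,\na^{k+2}{\rm div}\,u$. The paper's weights are engineered so that both members of each coupling pair share the single coefficient $(1+\theta)$ (respectively $(1+n)$), whence the top-order terms assemble into $\int(1+\theta)\operatorname{div}(\na^l u\,\na^l n)=-\int\na\theta\cdot\na^l u\,\na^l n$ and only one derivative falls on the coefficient. The price of the weights is the extra harmless terms $I_1$ (from $\partial_t$ hitting the weights) and $\int n\,\na^l u\cdot\na^l E$ in $I_7$, all of which are lower order. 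You would need to adopt these weighted multipliers (or an equivalent symmetrizer) for the lemma to close at $l=k+2$.
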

\begin{proof}
Applying $\na^l$ $(l=k,k+1,k+2)$ to the first five equations in \eqref{yi} and then multiplying the resulting identities by $\frac{1+\theta}{1+n}\na^ln$, $1+n\na^lu$, $\frac32\frac{1+n}{1+\theta}\na^l\theta$, $\na^lE$, $\na^lB$ respectively, summing up and integrating over $\r3$, we obtain
\begin{eqnarray}\label{yi u}
\frac{d}{dt}&&\int\frac{1+\theta}{1+n}|\na^ln|^2+(1+n)|\na^lu|^2+\frac32\frac{1+n}{1+\theta}|\na^l\theta|^2+|\na^l(E,B)|^2+\norm{\na^l(u,\theta)}_{L^2}^2\nonumber\\
&&\le\int\left(\frac{\pa_t\theta}{1+n}-\frac{1+\theta}{(1+n)^2}\pa_tn\right)|\na^ln|^2+\pa_tn|\na^lu|^2+\frac32\left(\frac{\pa_tn}{1+\theta}-
\frac{1+n}{(1+\theta)^2}\pa_t\theta\right)|\na^l\theta|^2\nonumber\\
&&\quad-2\int\frac{1+\theta}{1+n}\na^l(u\cdot\na n)\na^ln+(1+n)\na^l(u\cdot\na u)\cdot\na^lu+\frac32\frac{1+n}{1+\theta}\na^l(u\cdot\na\theta)\na^l\theta\nonumber\\
&&\quad-2\int\frac{1+\theta}{1+n}\na^l((1+n){\rm div}u)\na^ln+(1+n)\na^l\left(\frac{1+\theta}{1+n}\na n\right)\cdot\na^lu\nonumber\\
&&\quad-2\int\frac{1+n}{1+\theta}\na^l((1+\theta){\rm div}u)\na^l\theta+(1+n)\na^l\na\theta\cdot\na^lu+\int\frac{1+n}{(1+\theta)}\na^l\theta\na^l(|u|^2)\nonumber\\
&&\quad-2\int(1+n)\na^l(u\times B)\cdot\na^lu-2\int n\na^lu\cdot\na^lE-\na^l(nu)\cdot\na^lE\nonumber\\
&&:=I_1+I_2+I_3+I_4+I_5+I_6+I_7.
\end{eqnarray}
First, by \eqref{n}, $\eqref{yi}_1$, $\eqref{yi}_3$ and Sobolev's embedding inequality, we easily obtain
\begin{equation*}
I_1\lesssim\left(1+\norm{(n,u,\theta)}_{H^3}\right)\norm{(n,u,\theta)}_{H^3}\norm{\na^l(n,u,\theta)}_{L^2}^2.
\end{equation*}

Next, we estimate the term $I_2$. We rewrite $I_2$ as
\begin{eqnarray*}
I_2&&=-2\int\frac{1+\theta}{1+n}\na^l(u\cdot\na n)\na^ln-2\int(1+n)\na^l(u\cdot\na u)\cdot\na^lu-3\int\frac{1+n}{1+\theta}\na^l(u\cdot\na\theta)\na^l\theta\nonumber\\
&&:=I_{21}+I_{22}+I_{23}.
\end{eqnarray*}First, we estimate $I_{21}$. By the commutator notation \eqref{commuta} and \eqref{n}, we have
\begin{eqnarray*}
I_{21}&&=-2\int\frac{1+\theta}{1+n}\na^l(u\cdot\na n)\na^ln=-2\int\frac{1+\theta}{1+n}\left(u\cdot \na\na^ln+\left[\na^l,u\right]\cdot\na n\right)\na^ln\nonumber\\
&&\lesssim\left|\int u\cdot\na(\na^ln\na^ln)+\left[\na^l,u\right]\cdot\na n\na^ln\right|.
\end{eqnarray*}
By integrating by parts, we have
\begin{equation*}
\int u\cdot\na(\na^ln\na^ln)=-\int {\rm div}u\left|\na^ln\right|^2\le\norm{{\rm div}u}_{L^\infty}\norm{\na^ln}_{L^2}^2.
\end{equation*}
We employ the commutator estimate of Lemma \ref{commutator} to bound
\begin{eqnarray*}
\int \left[\na^l,u\right]\cdot\na n\na^ln&&\le C_l\left(\norm{\na u}_{L^\infty}\norm{\na^{l-1}\na n}_{L^2}+\norm{\na^lu}_{L^2}\norm{\na n}_{L^\infty}\right)\norm{\na^ln}_{L^2}\nonumber\\
&&\le C_l\norm{\na(n,u)}_{L^\infty}\norm{\na^l(n,u)}_{L^2}\norm{\na^ln}_{L^2}.
\end{eqnarray*}
Then applying the same arguments to $I_{22}$ and $I_{23}$, by Sobolev's and Cauchy's inequalities, we obtain
\begin{equation*}
I_2\le C_l\norm{(n,u,\theta)}_{H^3}\norm{\na^l(n,u,\theta)}_{L^2}^2.
\end{equation*}

We now estimate $I_3$. By the commutator notation \eqref{commuta}, we rewrite $I_3$ as
\begin{eqnarray*}
I_3&&=-2\int (1+\theta)\left(\na^l{\rm div}u\na^ln+\na^l\na n\cdot\na^lu\right)\nonumber\\
&&\quad-2\int\frac{1+\theta}{1+n}\left[\na^l,1+n\right]{\rm div}u\na^ln+(1+n)\left[\na^l,\frac{1+\theta}{1+n}\right]\na n\cdot\na^lu.
\end{eqnarray*}By integrating by parts, we obtain
\begin{eqnarray*}
-&&2\int (1+\theta)\left(\na^l{\rm div}u\na^ln+\na^l\na n\cdot\na^lu\right)=-2\int (1+\theta){\rm div}\left(\na^lu\na^ln\right)\nonumber\\
&&=2\int\na\theta\cdot\na^lu\na^ln\lesssim\norm{\na\theta}_{L^\infty}\norm{\na^lu}_{L^2}\norm{\na^ln}_{L^2}.
\end{eqnarray*}By Lemma \ref{commutator} and \ref{A2},  Sobolev's and Cauchy's inequalities, we obtain
\begin{eqnarray*}
-&&2\int\frac{1+\theta}{1+n}\left[\na^l,1+n\right]{\rm div}u\na^ln+(1+n)\left[\na^l,\frac{1+\theta}{1+n}\right]\na n\cdot\na^lu\nonumber\\
&&\lesssim C_l\norm{\na(n,u,\theta)}_{L^\infty}\norm{\na^l(n,u,\theta)}_{L^2}^2.
\end{eqnarray*}
In fact, there is a key estimate in the following.
\begin{eqnarray}\label{beauty}
\norm{\left[\na^l,\frac{1+\theta}{1+n}\right]\na n}_{L^2}&&\le C_l\norm{\na\left(\frac{1+\theta}{1+n}\right)}_{L^\infty}\norm{\na^ln}_{L^2}+C_l\norm{\na^l\left(\frac{1+\theta}{1+n}\right)}_{L^2}\norm{\na n}_{L^\infty}\nonumber\\
&&\le C_l\norm{\na(n,\theta)}_{L^\infty}\norm{\na^ln}_{L^2}+C_l\norm{\frac{1}{1+n}}_{L^\infty}\norm{\na^l\theta}_{L^2}\norm{\na n}_{L^\infty}\nonumber\\
&&\quad+C_l\norm{(1+\theta)}_{L^\infty}\norm{\na^l\left(\frac{1}{1+n}\right)}_{L^2}\norm{\na n}_{L^\infty}\nonumber\\
&&\le C_l\norm{\na(n,\theta)}_{L^\infty}\norm{\na^l(n,\theta)}_{L^2}+C_l\norm{\na n}_{L^\infty}\norm{\na^l\left(1-\frac{1}{1+n}\right)}_{L^2}\nonumber\\
&&\le C_l\norm{\na(n,\theta)}_{L^\infty}\norm{\na^l(n,\theta)}_{L^2},
\end{eqnarray}
where we have used that $1-\frac{1}{1+n}\sim n$.
Then applying the same arguments to $I_{4}$ and $I_{5}$, by Sobolev's and Cauchy's inequalities, we obtain
\begin{equation*}
\begin{split}
I_3+I_4+I_5\le C_l\norm{(n,u,\theta)}_{H^3}\norm{\na^l(n,u,\theta)}_{L^2}^2.
\end{split}
\end{equation*}

For the term $I_6$, as in \cite{TWW},  we have that for $l=k$
\begin{equation}\label{I3 k}
I_6\le C_k\left( \norm{ u}_{H^{\frac{k}{2}}}  +\norm{\na B}_{L^2}  \right) \left(\norm{\na^{k}u}_{L^2}^2 +\norm{\na^{k+1}B}_{L^2}^2 \right);
\end{equation}
for $l=k+1$
\begin{equation*}
I_6\le C_k\left( \norm{  u}_{H^{\frac{k}{2}+1}\cap H^2}  +\norm{\na B}_{L^2}  \right) \left(\norm{\na^{k+1}u}_{L^2}^2 +\norm{\na^{k+1}B}_{L^2}^2 \right);
\end{equation*}
for $l=k+2$
\begin{eqnarray*}
I_6&&\le C_k\left( \norm{  u}_{H^{\frac{k}{2}+2}\cap H^3}  +\norm{\na B}_{L^2}  \right) \left(\norm{\na^{k+2}u}_{L^2}^2 +\norm{\na^{k+1}B}_{L^2}^2 \right)
\\&&\quad+C\norm{ u}_{L^\infty} \norm{\na^{k+2}B}_{L^2}\norm{\na^{k+2}u}_{L^2}.
\end{eqnarray*}

Next, we estimate the last term $I_7$. By Lemma \ref{commutator}, we easily obtain for $l=k$ or $k+1$,
\begin{equation*}
\begin{split}
I_7\le C_l\norm{(n,u)}_{H^2}  \left(\norm{\na^l (n, u)}_{L^2}^2 +\norm{\na^l E}_{L^2}^2 \right);
\end{split}
\end{equation*}for $l=k+2$,
\begin{equation*}
\begin{split}
I_7\le C_k\norm{(n,u)}_{L^\infty} \norm{\na^{k+2} (n, u)}_{L^2}\norm{\na^{k+2}E}_{L^2}.
\end{split}
\end{equation*}

Consequently, plugging these estimates for $I_1\sim I_7$ into \eqref{yi u} with $l=k, k+1, k+2$, and then summing up, we deduce \eqref{energy 1} from \eqref{n}.
\end{proof}

Note that in Lemma \ref{u} we only derive the dissipative estimate of $u,\theta$. We now recover the dissipative
estimates of $ n, E$ and $B$ by constructing some interactive energy functionals in the following lemma.

\begin{lemma}\label{other di}
For any integer $k\ge0$, we have that for any small fixed $\eta>0$,
\begin{eqnarray}\label{other dissipation}
&&\frac{d}{dt}\left(\sum_{l=k}^{k+1}\int \na^lu\cdot\na\na^{l} n +\sum_{l=k}^{k+1}\int \na^{l}u\cdot\na^lE -\eta\int
\na^k E \cdot\na^{k}\na\times B \right)\nonumber
\\&&\quad+\la \left(\sum_{l=k}^{k+2}\norm{\na^{l}n}_{L^2}^2+\sum_{l=k}^{k+1}\norm{\na^{l}E}_{L^2}^2+ \norm{\na^{k+1} B}_{L^2}^2\right)\nonumber
\\&&\qquad\le
 C\sum_{l=k}^{k+2}\norm{\na^{l} (u,\theta)}_{L^2}^2+C_kG(n,u,\theta,B)\left(\sum_{l=k}^{k+2}\norm{ \na^{l}(n, u,\theta )}_{L^2}^2+\norm{\na^{k+1}B}_{L^2}^2 \right),
\end{eqnarray}
where $G(n,u,\theta,B)$ is defined by
$$G(n,u,\theta,B):=\norm{ (n, u,\theta)}_{H^{\frac k2+1}\cap H^3}^2+\norm{\na B}_{L^2}^2.$$
\end{lemma}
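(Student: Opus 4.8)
The plan is to differentiate the three interactive functionals on the left of \eqref{other dissipation} in time, substitute the equations \eqref{yi}, and let each functional manufacture the dissipation of one variable ($n$, $E$, $B$) whose relaxation structure is missing. Throughout, I treat the genuine $(u,\theta)$ dissipation delivered by Lemma \ref{u} as a free resource: any term carrying a $u$- or $\theta$-derivative of order $\le k+2$ is discarded onto the right and absorbed into $C\sum_{l=k}^{k+2}\norm{\na^l(u,\theta)}_{L^2}^2$, while each genuinely nonlinear contribution is estimated by $C_kG(\cdots)(\cdots)$ using Lemma \ref{A1}, Lemma \ref{commutator}, Lemma \ref{A2} and the key commutator bound \eqref{beauty}.

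For the first functional $\sum_{l=k}^{k+1}\int\na^l u\cdot\na\na^l n$, I would insert $\eqref{yi}_2$ for $\pa_t u$ and $\eqref{yi}_1$ for $\pa_t n$. The principal part $-\frac{1+\theta}{1+n}\na n$ of the velocity equation, whose leading behaviour is $-\na n$, yields the decisive density dissipation $-\norm{\na^{l+1}n}_{L^2}^2$; the field term $-E$ produces, after one integration by parts and the constraint ${\rm div}E=-n$, the \emph{favourable} extra term $-\norm{\na^l n}_{L^2}^2$ rather than a dangerous $n$--$E$ cross term. The remaining linear pieces $-u$, $-\na\theta$, $-u\times B_\infty$ are handled by Young's inequality with the small weight on $\na^{l+1}n$ (absorbed by the density dissipation) and the large weight on the $(u,\theta)$-derivatives (sent right); the piece from $\pa_t n=-{\rm div}u-\cdots$ integrates by parts into $\norm{\na^l{\rm div}u}_{L^2}^2\le\norm{\na^{l+1}u}_{L^2}^2$, again a $u$-derivative sent right.

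The second functional $\sum_{l=k}^{k+1}\int\na^l u\cdot\na^l E$ gives the electric dissipation $-\norm{\na^l E}_{L^2}^2$ from the $-E$ term in $\eqref{yi}_2$, and again $-\norm{\na^l n}_{L^2}^2$ from $-\na n$ together with ${\rm div}E=-n$. The delicate contribution is $\int\na^l u\cdot\na^l(\na\times B)$ coming from $\eqref{yi}_4$; here I would use Young's inequality \emph{asymmetrically}, assigning the small weight $\eps$ to $\norm{\na^{l+1}B}_{L^2}^2$ (for $l=k+1$ after transferring the curl onto $u$) and the large weight $C_\eps$ to the $u$-derivatives. This is the crucial use of the regularity-loss structure: only top-order magnetic dissipation is available, but $u$-dissipation up to order $k+2$ is abundant. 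The magnetic dissipation itself is created by the third functional $-\eta\int\na^k E\cdot\na^k\na\times B$: via $\eqref{yi}_4$ and ${\rm div}B=0$ (so that $\norm{\na^k\na\times B}_{L^2}=\norm{\na^{k+1}B}_{L^2}$) it produces $-\eta\norm{\na^{k+1}B}_{L^2}^2$, while the back-reaction through $\pa_t B=-\na\times E$ and the identity $\na\times\na\times E=\na({\rm div}E)-\De E=-\na n-\De E$ yields a beneficial $-\eta\norm{\na^k n}_{L^2}^2$ and an unfavourable $+\eta\norm{\na^{k+1}E}_{L^2}^2$.

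The main obstacle is then purely the bookkeeping of signs and the order in which constants are fixed. First I would fix $\eta$ small so that the bad $+\eta\norm{\na^{k+1}E}_{L^2}^2$ is dominated by the $-\norm{\na^{k+1}E}_{L^2}^2$ produced by the second functional; only afterwards would I choose the Young weight $\eps<\eta$ so that the $+\eps\norm{\na^{k+1}B}_{L^2}^2$ generated by the asymmetric estimate is dominated by $-\eta\norm{\na^{k+1}B}_{L^2}^2$. With this hierarchy all the dissipation terms of $n$, $E$ and $B$ survive with a definite (albeit $\eta$-dependent) constant $\la$, the leftover $u,\theta$-derivatives fit into $C\sum_{l=k}^{k+2}\norm{\na^l(u,\theta)}_{L^2}^2$, and every nonlinearity closes into $C_kG(n,u,\theta,B)\big(\sum_{l=k}^{k+2}\norm{\na^l(n,u,\theta)}_{L^2}^2+\norm{\na^{k+1}B}_{L^2}^2\big)$, which is exactly \eqref{other dissipation}.
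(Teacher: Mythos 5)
Your proposal is correct and follows essentially the same route as the paper: the same three interactive functionals, integration by parts in $t$ via the continuity and Amp\`ere equations, the constraint ${\rm div}E=-n$ to produce the low-order density dissipation, the asymmetric Young estimate on $\int\na^l u\cdot\na\times\na^l B$ (moving the curl onto $u$ when $l=k+1$), and the same hierarchy $\varepsilon\ll\eta\ll 1$ to absorb the $+\eta\norm{\na^{k+1}E}_{L^2}^2$ and $+\varepsilon\norm{\na^{k+1}B}_{L^2}^2$ terms. The only cosmetic difference is that you split $\norm{\na\times\na^k E}_{L^2}^2$ via the curl--curl identity where the paper simply bounds it by $\norm{\na^{k+1}E}_{L^2}^2$.
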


\begin{proof} We divide the proof into four steps.

{\it Step 1: Dissipative estimate of $n$.}

Applying $\na^l$ ($l=k,k+1$) to $\eqref{yi}_2$ and then taking the $L^2$ inner product with $\na\na^{l} n$, we obtain
\begin{eqnarray}  \label{yi n}
\int&&\partial_t\na^lu \cdot\na \na^{l} n +\int \frac{1+\theta}{1+n}|\na\na^ln|^2\nonumber\\
&&\le -\int  \na^{l}E\cdot\na \na^{l} n +C \norm{\na^{l}(u,\na\theta)}_{L^2}\norm{\na^{l+1} n}_{L^2}+\norm{\na^{l}\left(u\cdot\na
u+u\times B\right)}_{L^2}\norm{\na^{l+1}  n}_{L^2}\nonumber\\
&&\quad-\int \left[\na^l\left(\frac{1+\theta}{1+n}\na n\right)-\frac{1+\theta}{1+n}\na\na^ln\right]\cdot\na^l\na n.
\end{eqnarray}

The delicate first term on the left-hand side of \eqref{yi n}
involves $\partial_t\na^{l} u$, and the key idea is to integrate by parts
in the $t$-variable and use the continuity equation $\eqref{yi}_1$. Thus integrating by
parts for both the $t$- and $x$-variables, we obtain
\begin{eqnarray*}
\int  \na^{l} \partial_tu\cdot\na\na^l n&&=\frac{d}{dt}\int
\na^{l}u\cdot\na\na^l n -\int  \na^{l}
 u\cdot \na \na^{l}\partial_t n
 =\frac{d}{dt}\int
\na^{l}u\cdot\na\na^l n +\int  \na^{l}
{\rm div} u\na^{l}\partial_t n  \\
&&=\frac{d}{dt}\int
\na^{l}u\cdot\na\na^l n -\norm{\na^{l} {{\rm div} }u}_{L^2}^2-\int \na^{l} {\rm div} u\na^{l}\left(
u \cdot \na n+ n{\rm div} u\right)
\\
&&\ge \frac{d}{dt}\int
\na^{l}u\cdot \na\na^l n -C\norm{\na^{l+1} u}_{L^2}^2
 -C\norm{\na^{l}(u \cdot \na n)}_{L^2}^2-C\norm{\na^{l}( n {\rm div} u)}_{L^2}^2.
\end{eqnarray*}
Using the commutator estimate of Lemma \ref{commutator}, we have
\begin{eqnarray}\label{san n}
\norm{\na^{l}(u \cdot \na n)}_{L^2}&&\le \norm{ u \cdot \na^{l} \na n }_{L^2}+\norm{\left[\na^l ,u \right]\cdot \na n }_{L^2}\nonumber
\\&&\le  \norm{ u}_{L^\infty} \norm{ \na^{l+1}   n }_{L^2}+C_l\norm{\na u}_{L^\infty}
\norm{\na^{l}n}_{L^2}+C_l\norm{\na^l u}_{L^2}\norm{\na n}_{L^\infty}\nonumber
\\&&\le C_l \norm{(n, u)}_{H^3} \left(\norm{ \na^{l}  ( n,u) }_{L^2}+\norm{ \na^{l+1}   n }_{L^2}\right).
\end{eqnarray}
Similarly,
\begin{equation}\label{si n}
\norm{\na^{l}( n {\rm div} u)}_{L^2}\le C_l \norm{(n, u)}_{H^3} \left(\norm{ \na^{l}  ( n,u) }_{L^2}+\norm{ \na^{l+1}  u }_{L^2}\right).
\end{equation}
Hence, we obtain
\begin{eqnarray}  \label{wu n}
 \int  \na^{l} \partial_tu\cdot\na\na^l n
&&\ge \frac{d}{dt}\int
\na^{l}u\cdot\na^l\na n -C\norm{\na^{l+1} u}_{L^2}^2\nonumber
\\&&\quad-C_l \norm{(n, u)}_{H^3}^2  \left(\norm{ \na^{l}  ( n,u) }_{L^2}^2+\norm{ \na^{l+1}  (n,u) }_{L^2}^2\right).
\end{eqnarray}

Next, integrating by parts and using the equation $\eqref{yi}_6$, we have
\begin{equation}
\begin{split}
-\int  \na^{l}E\cdot \na\na^{l} n &=\int  \na^{l}{\rm div} E\na^{l} n
=-\norm{\na^{l}n}_{L^2}^2.
\end{split}
\end{equation}
And as in \eqref{san n}--\eqref{si n}, we have
\begin{equation} \label{liu n}
\begin{split}
\norm{\na^{l}\left(u\cdot\na
u\right)}_{L^2}\le C_l \norm{u}_{H^3}  \left(\norm{ \na^{l}u }_{L^2}+\norm{ \na^{l+1} u }_{L^2}\right).
\end{split}
\end{equation}
From the estimate of $I_6$ in Lemma \ref{u}, we have that for $l=k$ or $k+1$,
\begin{equation} \label{qi n}
\norm{\na^{l}\left(u\times B\right)}_{L^2} \le C_k\left( \norm{  u}_{H^{\frac{k}{2}+1}\cap H^2}  +\norm{\na B}_{L^2}  \right) \left(\norm{\na^l u}_{L^2} +\norm{\na^{k+1}B}_{L^2} \right).
\end{equation}

Lastly, by Lemma \ref{commutator} and \eqref{beauty}, we obtain
\begin{eqnarray} \label{ba n}
-&&\int \left[\na^l\left(\frac{1+\theta}{1+n}\na n\right)-\frac{1+\theta}{1+n}\na\na^ln\right]\cdot\na^l\na n\nonumber\\
&&=-\int \left[\na^l,\frac{1+\theta}{1+n}\right]\na n\cdot\na^l\na n\lesssim C_l\norm{\na(n,\theta)}_{L^\infty}\left(\norm{\na^l(n,\theta)}_{L^2}^2+\norm{\na^{l+1}n}_{L^2}^2\right).
\end{eqnarray}

Plugging these estimates \eqref{wu n}--\eqref{ba n} into
\eqref{yi n}, by Cauchy's inequality and \eqref{n}, we obtain
\begin{eqnarray}  \label{n estimate}
\frac{d}{dt}&&\sum_{l=k}^{k+1}\int \na^lu\cdot\na\na^{l} n +\la \sum_{l=k}^{k+2}\norm{\na^{l}n}_{L^2}^2
\le
 C\sum_{l=k}^{k+2}\norm{\na^{l} (u,\theta)}_{L^2}^2\nonumber\\ &&\quad
 +C_k  \left(\norm{ (n, u, \theta)}_{H^{\frac{k}{2}+1}\cap H^3}^2+\norm{\na B}_{L^2}^2  \right)  \left(\sum_{l=k}^{k+2}\norm{ \na^{l}(n, u, \theta )}_{L^2}^2+\norm{\na^{k+1}B}_{L^2}^2 \right).
\end{eqnarray}
This completes the dissipative estimate for $n$.

{\it Step 2: Dissipative estimate of $E$.}

Applying $\na^l$ ($l=k,k+1$) to $\eqref{yi}_2$ and then taking the $L^2$ inner product with $\na^{l} E$, we obtain
\begin{eqnarray}  \label{yi E}
\int  \na^l \partial_tu \cdot\na^{l}E + \norm{\na^{l} E}_{L^2}^2
&&\le -\int \na^{l}\left(\frac{1+\theta}{1+n}\na n\right)\cdot \na^{l}E +C \norm{\na^{l}(u,\na \theta)}_{L^2}\norm{\na^{l}  E}_{L^2}\nonumber\\
 &&\quad +\norm{\na^{l}\left(u\cdot\na
u+u\times B\right)}_{L^2}\norm{\na^{l } E}_{L^2}.
\end{eqnarray}

Again, the delicate first term on the left-hand side of \eqref{yi E}
involves $\partial_t\na^{l} u$, and the key idea is to integrate by parts
in the $t$-variable and use the equation $\eqref{yi}_4$ in the Maxwell system. Thus we obtain
\begin{eqnarray}  \label{er E}
 \int  \na^{l} \partial_tu\cdot\na^lE &&=\frac{d}{dt}\int
\na^{l}u\cdot\na^lE -\int  \na^{l}
u\cdot\na^{l}\partial_t E  \nonumber\\
&&=\frac{d}{dt}\int
\na^{l}u\cdot\na^lE -\norm{\na^{l}u}_{L^2}^2-\int \na^{l} u\cdot\na^{l}\left(nu+
\na\times B\right) .
\end{eqnarray}
By Lemma \ref{commutator}, we have
\begin{equation}\label{san E}
\norm{\na^{l}\left(nu\right)}_{L^2}\le C_l  \norm{(n,u)}_{H^2}\norm{\na^l (n,u)}_{L^2} .
\end{equation}
We must be much more careful about the remaining term in \eqref{er E} since there is no small factor in front of it. The key is to use Cauchy's inequality and distinct the cases of $l=k$ and $l=k+1$ due to the weakest dissipative estimate of $B$. For $l=k$, we have
\begin{equation}\label{si E}
-\int \na^{k} u\cdot
\na\times \na^{k} B   \le \varepsilon \norm{ \na^{k+1} B}_{L^2}^2+C_\varepsilon \norm{\na^{k} u}_{L^2}^2;
\end{equation}
for $l=k+1$, integrating by parts, we obtain
\begin{eqnarray}\label{wu E}
-\int \na^{k+1} u\cdot
\na\times \na^{k+1} B  &&=
-\int \na\times \na^{k+1} u\cdot
 \na^{k+1} B\nonumber\\
&&\le \varepsilon \norm{ \na^{k+1} B}_{L^2}^2+C_\varepsilon \norm{\na^{k+2} u}_{L^2}^2.
\end{eqnarray}
By Lemma \ref{commutator}, $\eqref{yi}_6$, \eqref{n} and \eqref{beauty}, we have
\begin{eqnarray}\label{liu E}
-\int \na^{l}\left(\frac{1+\theta}{1+n}\na n\right)\cdot \na^{l}E&&=-\int\left(\left[\na^l,\frac{1+\theta}{1+n}\right]\na n+\frac{1+\theta}{1+n}\na\na^ln\right)\cdot\na^lE\nonumber\\
&&=-\int\left[\na^l,\frac{1+\theta}{1+n}\right]\na n\cdot\na^lE\nonumber\\
&&\quad+\int \frac{1+\theta}{1+n}\na^ln\cdot\na^l{\rm div}E+\int \na\left(\frac{1+\theta}{1+n}\right)\na^ln\cdot\na^lE\nonumber\\
&&\le \varepsilon \norm{ \na^lE}_{L^2}^2+ C_{l,\varepsilon}\norm{(n,\theta)}_{H^3}\norm{\na^{l}(n,\theta)}_{L^2}^2.
\end{eqnarray}

Plugging the estimates \eqref{er E}--\eqref{liu E} and \eqref{liu n}--\eqref{qi n} from Step 1 into
\eqref{yi E}, by Cauchy's inequality, we then obtain
\begin{eqnarray}  \label{E estimate}
&&\frac{d}{dt}\sum_{l=k}^{k+1}\int \na^{l}u\cdot\na^lE +\la \sum_{l=k}^{k+1}\norm{\na^{l}E}_{L^2}^2\le \varepsilon \norm{ \na^{k+1} B}_{L^2}^2+
 C_\varepsilon \sum_{l=k}^{k+2}\norm{\na^{l} (u,\theta)}_{L^2}^2\nonumber
  \\ &&\qquad +C_k  \left(\norm{ (n, u,\theta)}_{H^{\frac k2+1}\cap H^3}^2+\norm{\na B}_{L^2}^2  \right)  \left(\sum_{l=k}^{k+2}\norm{ \na^{l}(n, u,\theta )}_{L^2}^2+\norm{\na^{k+1}B}_{L^2}^2 \right).
\end{eqnarray}
This completes the dissipative estimate for $E$.

{\it Step 3: Dissipative estimate of $B$.}

Applying $\na^k$ to $\eqref{yi}_4$ and then taking the $L^2$ inner product with $-\na\times\na^{k} B$,  we obtain
\begin{eqnarray}  \label{D12}
&&-\int \na^k \partial_tE \cdot\na\times\na^{k} B +\norm{\na\times\na^{k} B}_{L^2}^2\nonumber
\\&&\quad\le  \norm{\na^{k}u}_{L^2}\norm{\na\times\na^{k} B}_{L^2}+
 \norm{\na^{k}(nu)}_{L^2}\norm{\na\times\na^{k} B}_{L^2}.
\end{eqnarray}

Integrating by parts for both the $t$- and $x$-variables and using the equation $\eqref{yi}_4$, we have
\begin{eqnarray}  \label{inter 31}
-\int \na^k \partial_tE \cdot\na\times\na^{k} B
&&=-\frac{d}{dt}\int
\na^k E \cdot\na\times\na^{k} B +\int  \na\times\na^{k}
E\cdot\na^{k}\partial_t B \nonumber \\
&&=-\frac{d}{dt}\int
\na^k E \cdot\na\times\na^{k} B - \norm{\na\times\na^{k}
E}_{L^2}^2.
\end{eqnarray}

Plugging the estimates \eqref{san E} with $l=k$ and \eqref{inter 31} into \eqref{D12} and by Cauchy's inequality, since ${\rm div} B=0$, we
then obtain
\begin{eqnarray}   \label{B estimate}
&&-\frac{d}{dt}\int
\na^k E \cdot\na^{k}\na\times B +\la \norm{\na^{k+1} B}_{L^2}^2\nonumber
\\&&\quad\le C\norm{\na^{k}u}_{L^2}^2+C\norm{\na^{k+1}
  E}_{L^2}^2+
C_k  \norm{(n,u)}_{H^2}^2 \norm{\na^k (n,u)}_{L^2}^2.
\end{eqnarray}
This completes the dissipative estimate for $B$.

{\it Step 4: Conclusion.}

Multiplying \eqref{B estimate} by a small enough but fixed constant $\eta$ and then adding it with \eqref{E estimate}  so that the second term on the right-hand side of \eqref{B estimate} can be absorbed, then choosing $\varepsilon$ small enough so that the first term on the right-hand side of \eqref{E estimate} can be absorbed; we obtain
\begin{eqnarray*}
&&\frac{d}{dt}\left(\sum_{l=k}^{k+1}\int \na^{l}u\cdot\na^lE-\eta\int
\na^k E \cdot\na^{k}\na\times B \right)+\la \left(\sum_{l=k}^{k+1}\norm{\na^{l}E}_{L^2}^2+ \norm{\na^{k+1} B}_{L^2}^2\right)
\\&&\quad\le  C \sum_{l=k}^{k+2}\norm{\na^{l} (u,\theta)}_{L^2}^2+C_kG(n,u,\theta,B)\left(\sum_{l=k}^{k+2}\norm{ \na^{l}(n, u,\theta )}_{L^2}^2+\norm{\na^{k+1}B}_{L^2}^2 \right).
\end{eqnarray*}
Here $G(n,u,\theta,B)$ is well-defined. Adding the inequality above with \eqref{n estimate}, we get \eqref{other dissipation}.
\end{proof}

\subsection{Negative Sobolev estimates}

In this subsection, we will derive the evolution of the negative Sobolev
norms of $(u,\theta,E,B)$. In order to estimate the nonlinear terms, we need to
restrict ourselves to that $s\in (0,3/2)$. We will establish the following lemma.

\begin{lemma}
\label{1Esle} For $s\in(0,1/2]$, we have
\begin{eqnarray}  \label{1E_s}
\frac{d}{dt}&&\norm{(u,\theta,E,B)}_{\dot{H}^{-s}}^2+\la\norm{(u,\theta)}_{\dot{H}^{-s}}^2\nonumber\\
&&\lesssim \left(\norm{(n,u,\theta)}_{H^2}^2+\norm{\na
B}_{H^1}^2 \right)\norm{(u,\theta,E,B)}_{\dot{H}^{-s}}+\norm{E}_{H^2}^2;
\end{eqnarray}
and for $s\in(1/2,3/2)$, we have
\begin{eqnarray}  \label{1E_s2}
\frac{d}{dt}&&\norm{(u,\theta,E,B)}_{\dot{H}^{-s}}^2+\la\norm{(u,\theta)}_{\dot{H}^{-s}}^2 \nonumber\\
&&\lesssim\left(\norm{(n,u,\theta)}_{H^{1}}^{2}
 +\norm{B}_{L^2}^{s-1/2}\norm{\na B}_{L^2}^{3/2-s}\norm{u}_{L^2}\right)\norm{(u,\theta,E,B)}_{\dot{H}^{-s}}+\norm{E}_{H^2}^2.
\end{eqnarray}
\end{lemma}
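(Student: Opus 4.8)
The plan is to test the evolution equations $\eqref{yi}_2$--$\eqref{yi}_5$ against the correctly weighted $\dot{H}^{-s}$ pairings. Concretely, I would apply $\Lambda^{-s}$ to each of the equations for $u,\theta,E,B$, take the $L^2$ inner product with $\Lambda^{-s}u$, $\tfrac32\Lambda^{-s}\theta$, $\Lambda^{-s}E$ and $\Lambda^{-s}B$ respectively, and sum. The weight $\tfrac32$ on the temperature equation is chosen exactly so that the pressure--temperature coupling symmetrizes, mirroring the weight $\tfrac32\tfrac{1+n}{1+\theta}$ used in Lemma \ref{u}. The time derivatives assemble into $\tfrac{d}{dt}$ of a functional equivalent to $\norm{(u,\theta,E,B)}_{\dot{H}^{-s}}^2$, while the relaxation terms $+u$ and $+\theta$ furnish the dissipation $\norm{u}_{\dot{H}^{-s}}^2+\tfrac32\norm{\theta}_{\dot{H}^{-s}}^2\gtrsim\norm{(u,\theta)}_{\dot{H}^{-s}}^2$.

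Next I would cancel the linear coupling terms. The electric coupling $\langle\Lambda^{-s}E,\Lambda^{-s}u\rangle$ from $\eqref{yi}_2$ cancels $-\langle\Lambda^{-s}u,\Lambda^{-s}E\rangle$ from $\eqref{yi}_4$; the Maxwell curl terms cancel because $\na\times$ is symmetric under integration by parts and commutes with $\Lambda^{-s}$, so that $-\langle\Lambda^{-s}\na\times B,\Lambda^{-s}E\rangle+\langle\Lambda^{-s}\na\times E,\Lambda^{-s}B\rangle=0$; the Lorentz term $u\times B_\infty$ drops since $\Lambda^{-s}(u\times B_\infty)=(\Lambda^{-s}u)\times B_\infty$ is pointwise orthogonal to $\Lambda^{-s}u$; and the temperature coupling $\langle\Lambda^{-s}\na\theta,\Lambda^{-s}u\rangle$ is annihilated by $\tfrac32\cdot\tfrac23\langle\Lambda^{-s}{\rm div}\,u,\Lambda^{-s}\theta\rangle=-\langle\Lambda^{-s}\na\theta,\Lambda^{-s}u\rangle$ after one integration by parts.

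The only linear term with no partner is the density coupling $\langle\Lambda^{-s}\na n,\Lambda^{-s}u\rangle$ (the top-order piece of $\tfrac{1+\theta}{1+n}\na n$), since $n$ is absent from the negative-order energy and carries no dissipation; I expect this to be the crux of the whole estimate. The device here, following the treatment of the electromagnetic field in Strain--Guo \cite{SG06}, is to eliminate $n$ through the Gauss law ${\rm div}\,E=-n$ from $\eqref{yi}_6$: this gives $\norm{\na n}_{\dot{H}^{-s}}=\norm{n}_{\dot{H}^{1-s}}=\norm{{\rm div}\,E}_{\dot{H}^{1-s}}\le\norm{E}_{\dot{H}^{2-s}}\lesssim\norm{E}_{H^2}$ (valid because $2-s\in(1/2,2)$), whence $\langle\Lambda^{-s}\na n,\Lambda^{-s}u\rangle\le\varepsilon\norm{u}_{\dot{H}^{-s}}^2+C_\varepsilon\norm{E}_{H^2}^2$, the first term being absorbed into the dissipation. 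This is exactly the source of the non-dissipative term $\norm{E}_{H^2}^2$ on the right of both \eqref{1E_s} and \eqref{1E_s2}, which is harmless later because it is controlled by the decaying positive-order energy.

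For the genuinely quadratic terms -- those from $u\cdot\na u$, $(\tfrac{1+\theta}{1+n}-1)\na n$ and $u\times B$ in $\eqref{yi}_2$, from $u\cdot\na\theta$, $\theta\,{\rm div}\,u$ and $|u|^2$ in $\eqref{yi}_3$, and from $nu$ in $\eqref{yi}_4$ -- I would factor out a $\norm{\cdot}_{\dot{H}^{-s}}$ by Cauchy--Schwarz and bound the negative norm of each product via the Riesz/Hardy--Littlewood--Sobolev inequality of Lemma \ref{Riesz lemma} and the Sobolev embedding of Lemma \ref{A1}, in the form $\norm{fg}_{\dot{H}^{-s}}\lesssim\norm{f}_{\dot{H}^{3/2-s}}\norm{g}_{L^2}$, with the composite coefficient $\tfrac{1+\theta}{1+n}-1$ handled by Lemma \ref{A2}. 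The two ranges of $s$ then differ solely through the interpolation of $\norm{\cdot}_{\dot{H}^{3/2-s}}$: for $s\in(0,1/2]$ one has $3/2-s\ge1$ and controls it by $H^2$, producing the prefactor $\norm{(n,u,\theta)}_{H^2}^2+\norm{\na B}_{H^1}^2$; for $s\in(1/2,3/2)$ one has $3/2-s<1$ and interpolates between $L^2$ and $\dot{H}^1$, producing $\norm{(n,u,\theta)}_{H^1}^2$ together with the distinctive term $\norm{B}_{L^2}^{s-1/2}\norm{\na B}_{L^2}^{3/2-s}\norm{u}_{L^2}$ arising from $\norm{u\times B}_{\dot{H}^{-s}}\lesssim\norm{B}_{\dot{H}^{3/2-s}}\norm{u}_{L^2}$. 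The restriction $s<3/2$ enters precisely here, since the exponent $p$ in Lemma \ref{Riesz lemma} degenerates to $1$ at $s=3/2$; the remaining work is bookkeeping of the sharp exponent $3/2-s$ across the threshold $s=1/2$, which is what forces the two separate statements \eqref{1E_s} and \eqref{1E_s2}.
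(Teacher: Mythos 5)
Your proposal is correct and follows essentially the same route as the paper: the $\Lambda^{-s}$ energy estimate of $\eqref{yi}_2$--$\eqref{yi}_5$ with the weight $\tfrac32$ on the temperature equation, elimination of the unpaired $\na n$ term through the Poisson equation $\eqref{yi}_6$ (the source of the $\norm{E}_{H^2}^2$ term), and estimation of the quadratic terms via Lemma \ref{Riesz lemma}, H\"older and Sobolev interpolation of $L^{3/s}$, with the two ranges of $s$ distinguished exactly as you describe. The only (cosmetic) difference is your decomposition $\tfrac{1+\theta}{1+n}\na n=\na n+\bigl(\tfrac{1+\theta}{1+n}-1\bigr)\na n$ versus the paper's $\tfrac{\na n}{1+n}+\tfrac{\theta\na n}{1+n}$, which is if anything slightly cleaner.
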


\begin{proof}
The $\Lambda^{-s}$ $(s>0)$ energy estimate of $\eqref{yi}_{2}$--$\eqref{yi}_{5}$ yield
\begin{eqnarray}  \label{1E_s_0}
&&\frac{1}{2}\frac{d}{dt}\left(\norm{(u,E,B)}_{\dot{H}^{-s}}^2+\frac{3}{2}\norm{\theta}_{\dot{H}^{-s}}^2\right) +\norm{u}_{\dot{H}^{-s}}^2+\frac32\norm{\theta}_{\dot{H}^{-s}}^2\nonumber
\\&&\quad=-\int \Lambda^{-s}\left(u\cdot\na u+\frac{\theta\na n}{1+n}+u\times B\right)
\cdot\Lambda^{-s} u -\int \Lambda^{-s}\left(\frac{\na n}{1+n}\right)
\cdot\Lambda^{-s} u\nonumber\\
&&\qquad+\int\Lambda^{-s}(nu)\cdot\Lambda^{-s} E-\frac32\int \Lambda^{-s}\left(u\cdot\na \theta+\frac23\theta{\rm div}u+\frac13|u|^2\right)\Lambda^{-s} \theta\nonumber\\
&&\quad\lesssim\norm{ u\cdot\na u+\theta\na n+u\times B}_{\dot{H}^{-s}} \norm{u}_{\dot{H}^{-s}}
+\norm{nu}_{\dot{H}^{-s}} \norm{E}_{\dot{H}^{-s}} +\norm{ \na n}_{\dot{H}^{-s}} \norm{u}_{\dot{H}^{-s}}\nonumber\\
&&\qquad+\norm{ u\cdot\na \theta+\theta{\rm div}u+|u|^2}_{\dot{H}^{-s}} \norm{\theta}_{\dot{H}^{-s}}.
\end{eqnarray}

We now restrict the value of $s$ in order to estimate the other terms on
the right-hand side of \eqref{1E_s_0}. If $s\in (0,1/2]$, then $1/2+s/3<1$
and $3/s\geq 6$. Then applying Lemma \ref{Riesz lemma}, together with H\"{o}lder's, Sobolev's and Young's inequalities, we
obtain
\begin{eqnarray*}
\norm{ u\cdot\na u }
_{\dot{H}^{-s}}
&&\lesssim \norm{ u\cdot\na u }_{L^{\frac{1}{1/2+s/3}}}
 \lesssim \norm{u}_{L^{3/s}}
\norm{\na u }_{L^{2}} \\
&&\lesssim \norm{\na u}_{L^{2}}^{1/2+s}
\norm{\na^2u }_{L^{2}}^{1/2-s}\norm{\na u}_{L^2}\lesssim \norm{ \na u}_{H^{1}}^{2}+\norm{\na u}_{L^{2}}^{2}  .
\end{eqnarray*}
Similarly, we obtain
\begin{eqnarray*}
\norm{\theta\na n+u\times B+nu+u\cdot\na \theta+\theta{\rm div}u+|u|^2}_{\dot{H}^{-s}}\lesssim\norm{(n,u,\theta)}_{H^2}^2+\norm{\na
B}_{H^1}^2.
\end{eqnarray*}

Now if $s\in (1/2,3/2)$, we shall estimate the right-hand side of
\eqref{1E_s_0} in a different way. Since $s\in
(1/2,3/2)$, we have that $1/2+s/3<1$ and $2<3/s<6$. Then applying Lemma \ref{Riesz lemma} and using
(different) Sobolev's inequality, we have
\begin{eqnarray*}
&& \nonumber
\norm{ u\cdot\na u }
_{\dot{H}^{-s}} \lesssim \norm{ u}_{L^{3/s}}\norm{\na u }
_{L^{2}} \lesssim \norm{u}_{L^{2}}^{s-1/2}
\norm{\na u }_{L^{2}}^{3/2-s}\norm{\na u}_{L^2}  \\
&&\qquad\qquad  \ \ \ \ \ \lesssim  \norm{u}_{H^{1}}^{2}+\norm{\na u}_{L^{2}}^{2};\\
&&\norm{ u\times B}_{\dot{H}^{-s}}
\lesssim \norm{B}
_{L^{2}}^{s-1/2}\norm{ \na  B}_{L^{2}}^{3/2-s}\norm{u}
_{L^{2}}.
\end{eqnarray*}Similarly, we obtain
\begin{equation*}
\norm{\theta\na n+nu+u\cdot\na \theta+\theta{\rm div}u+|u|^2}_{\dot{H}^{-s}}\lesssim\norm{(n,u,\theta)}_{H^{1}}^{2}.
\end{equation*}

Note that we   fail to estimate the remaining last term on the right-hand side of \eqref{1E_s_0} as above. To overcome this obstacle, the key point is to make full use of  the Poisson equation $\eqref{yi}_6$. Indeed, using $\eqref{yi}_6$, we have
\begin{equation*}
\begin{split}
\norm{ \na n}_{\dot{H}^{-s}}&\lesssim \norm{\Lambda^{-s}\na {\rm div} E}_{L^2}\lesssim  \norm{E}_{H^2}.
\end{split}
\end{equation*}
Now collecting all the estimates we have derived, by Cauchy's inequality, we deduce \eqref{1E_s} for $s\in(0,1/2]$ and \eqref{1E_s2} for $s\in(1/2,3/2)$.
\end{proof}

\subsection{Negative Besov estimates}

In this subsection, we will derive the evolution of the negative Besov
norms of $(u,\theta,E,B)$. The argument is similar to the previous subsection.

\begin{lemma}
\label{1Esle2} For $s\in(0,1/2]$, we have
\begin{eqnarray*}
\frac{d}{dt}\norm{(u,\theta,E,B)}_{\dot{B}_{2,\infty}^{-s}}^2+\la\norm{(u,\theta)}_{\dot{B}_{2,\infty}^{-s}}^2\lesssim \left(\norm{(n,u,\theta)}_{H^2}^2+\norm{\na
B}_{H^1}^2 \right)\norm{(u,\theta,E,B)}_{\dot{B}_{2,\infty}^{-s}}+\norm{E}_{H^2}^2;
\end{eqnarray*}
and for $s\in(1/2,3/2]$, we have
\begin{eqnarray*}
\frac{d}{dt}&&\norm{(u,\theta,E,B)}_{\dot{B}_{2,\infty}^{-s}}^2+\la\norm{(u,\theta)}_{\dot{B}_{2,\infty}^{-s}}^2 \\
&&\lesssim\left(\norm{(n,u,\theta)}_{H^{1}}^{2}
 +\norm{B}_{L^2}^{s-1/2}\norm{\na B}_{L^2}^{3/2-s}\norm{u}_{L^2}\right)\norm{(u,\theta,E,B)}_{\dot{B}_{2,\infty}^{-s}}+\norm{E}_{H^2}^2.
\end{eqnarray*}
\end{lemma}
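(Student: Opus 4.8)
The plan is to mirror the structure of Lemma \ref{1Esle} exactly, replacing the $\dot{H}^{-s}$ norm by the $\dot{B}_{2,\infty}^{-s}$ norm throughout. First I would apply the Littlewood-Paley projection $\dot{\Delta}_j$ to each of the equations $\eqref{yi}_2$--$\eqref{yi}_5$, then take the $L^2$ inner product of the resulting identities with $2^{-2sj}\dot{\Delta}_j u$, $2^{-2sj}\dot{\Delta}_j\theta$, $2^{-2sj}\dot{\Delta}_j E$, $2^{-2sj}\dot{\Delta}_j B$ respectively (with the weights $1,\frac32,1,1$ as in the Sobolev case), and sum over $j\in\Z$. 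Because $\dot{\Delta}_j$ commutes with the constant-coefficient linear part (the $\na\times$, the relaxation terms $u$, $\theta$, and the gyration term $u\times B_\infty$), the linear structure is preserved and the dissipative terms $\|u\|_{\dot{B}_{2,\infty}^{-s}}^2+\frac32\|\theta\|_{\dot{B}_{2,\infty}^{-s}}^2$ come out with a good sign, exactly as in \eqref{1E_s_0}.

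The key difference from the Sobolev case is that one cannot simply sum the Parseval identity; instead, for each fixed $j$ one estimates $2^{-2sj}\|\dot{\Delta}_j(\,\cdot\,)\|_{L^2}$ and then takes the supremum over $j$ rather than an $\ell^2$ sum. The product estimates are handled by the embedding Lemma \ref{Lp embedding}: for a nonlinear term $N$, I would bound $\|N\|_{\dot{B}_{2,\infty}^{-s}}\lesssim\|N\|_{L^p}$ with $1/p=1/2+s/3$, then estimate the $L^p$ norm by exactly the same H\"older-Sobolev-Young chain used in Lemma \ref{1Esle} for the terms $u\cdot\na u$, $\theta\na n$, $u\times B$, $nu$, $u\cdot\na\theta$, $\theta\,{\rm div}\,u$ and $|u|^2$, splitting into the two ranges $s\in(0,1/2]$ and $s\in(1/2,3/2]$ in the same way. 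The troublesome term $\na n$ is again controlled via the Poisson relation $\eqref{yi}_6$: one writes $\na n=-\na\,{\rm div}\,E$ and bounds $\|\na n\|_{\dot{B}_{2,\infty}^{-s}}\lesssim\|E\|_{H^2}$, using the embedding $H^2\hookrightarrow\dot{B}_{2,\infty}^{2-s}$ together with the fact that two derivatives land on $E$.

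A genuine subtlety, and the step I expect to be the main obstacle, is the treatment of the low-frequency blocks: for $s=3/2$ the embedding in Lemma \ref{Lp embedding} is the endpoint $L^1\hookrightarrow\dot{B}_{2,\infty}^{-3/2}$, which is exactly why the Besov scale extends to the closed endpoint $s\le 3/2$ whereas the Sobolev scale must stop strictly below $3/2$. I would therefore be careful to verify that every nonlinear term, in particular $u\times B$ estimated as $\|B\|_{L^2}^{s-1/2}\|\na B\|_{L^2}^{3/2-s}\|u\|_{L^2}$, remains controlled uniformly up to and including $s=3/2$, where the exponent $3/2-s$ degenerates to $0$; at the endpoint this reduces to $\|u\times B\|_{L^1}\lesssim\|u\|_{L^2}\|B\|_{L^2}$, which is clean. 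The remaining bookkeeping—pairing each quadratic nonlinearity with the correct dual factor $\|u\|_{\dot{B}_{2,\infty}^{-s}}$, $\|\theta\|_{\dot{B}_{2,\infty}^{-s}}$ or $\|E\|_{\dot{B}_{2,\infty}^{-s}}$ after Cauchy's inequality, and absorbing $\|E\|_{H^2}^2$ into the right-hand side—is routine once the per-frequency estimates and the endpoint embedding are in place, so the two asserted inequalities follow.
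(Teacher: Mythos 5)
Your proposal is correct and follows essentially the same route as the paper: the paper likewise performs the $\dot{\Delta}_j$ energy estimate on $\eqref{yi}_2$--$\eqref{yi}_5$, multiplies by $2^{-2sj}$, takes the supremum over $j\in\mathbb{Z}$, and then repeats the nonlinear estimates of Lemma \ref{1Esle} verbatim with Lemma \ref{Lp embedding} in place of Lemma \ref{Riesz lemma}, handling $\na n$ through the Poisson equation and noting that the endpoint $s=3/2$ is now admissible.
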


\begin{proof}
The $\dot{\Delta}_{j}$ energy estimates of $\eqref{yi}_{2}$--$\eqref{yi}_{5}$ yield, with multiplication of $2^{-2sj}$ and then taking the supremum over ${j\in\mathbb{Z}}$,
\begin{eqnarray*}
\frac{1}{2}&&\frac{d}{dt}\left(\norm{(u,E,B)}_{\dot{B}_{2,\infty}^{-s}}^2  +\frac32\norm{\theta}_{\dot{B}_{2,\infty}^{-s}}^2\right)  + \norm{u}_{\dot{B}_{2,\infty}^{-s}}^2+\frac32\norm{\theta}_{\dot{B}_{2,\infty}^{-s}}^2\\
&&\lesssim\sup\limits_{j\in\mathbb{Z}}2^{-2sj}\left(  -\int  \dot{\Delta}_{j}\left(u\cdot\na u+\frac{\theta}{1+n}\na n+u\times B\right)
\cdot\dot{\Delta}_{j} u \right)\\
&&\quad+\sup\limits_{j\in\mathbb{Z}}2^{-2sj}\left(\int \dot{\Delta}_{j}(nu)\cdot\dot{\Delta}_{j} E -\int \dot{\Delta}_{j}\left(\frac{\na n}{1+n}\right)
\cdot\dot{\Delta}_{j} u\right)
\\
&&\quad+\sup\limits_{j\in\mathbb{Z}}2^{-2sj}\left(  -\frac32\int  \dot{\Delta}_{j}\left(u\cdot\na \theta+\frac{2}{3}\theta{\rm div}u+\frac13|u|^2\right)
\cdot\dot{\Delta}_{j} \theta \right)\\
&&\lesssim\norm{ u\cdot\na u+\theta\na n+u\times B}_{\dot{B}_{2,\infty}^{-s}} \norm{u}_{\dot{B}_{2,\infty}^{-s}}
+\norm{nu}_{\dot{B}_{2,\infty}^{-s}} \norm{E}_{\dot{B}_{2,\infty}^{-s}} +\norm{ \na n}_{\dot{B}_{2,\infty}^{-s}} \norm{u}_{\dot{B}_{2,\infty}^{-s}}\\
&&\quad +\norm{ u\cdot\na \theta+\theta{\rm div}u+|u|^2}_{\dot{B}_{2,\infty}^{-s}} \norm{\theta}_{\dot{B}_{2,\infty}^{-s}}.
\end{eqnarray*}
Then the proof is exactly the same as the proof of Lemma \ref{1Esle} except that we should apply Lemma \ref{Lp embedding} instead to estimate the $\dot{B}_{2,\infty}^{-s}$ norm. Note that we allow $s=3/2$.
\end{proof}

\section{Proof of theorems}\label{section3}

\subsection{Proof of Theorem \ref{existence}}

In this subsection, we will prove the unique global solution to the system \eqref{yi}, and the key point is that we only assume the $H^3$ norm of initial data is small.

{\it Step 1. Global small $\mathcal{E}_3$ solution.}

We first close the energy estimates at the $H^3$ level by assuming a priori that $ \sqrt{\mathcal{E}_3(t)}\le \delta$ is sufficiently small.
Taking $k=0,1$ in \eqref{energy 1} of Lemma \ref{u} and then summing up, we obtain
\begin{equation} \label{end 1}
\begin{split}
\frac{d}{dt}  \sum_{l=0}^3 \norm{ \na^{l}(n, u,\theta, E, B )}_{L^2}^2 +\la\sum_{l=0}^3\norm{\na^{l} (u,\theta)}_{L^2}^2 \lesssim  \sqrt{\mathcal{E}_3}\mathcal{D}_3+\sqrt{\mathcal{D}_3}\sqrt{\mathcal{D}_3}\sqrt{\mathcal{E}_3} \lesssim  \delta\mathcal{D}_3.\end{split}
\end{equation}
Taking $k=0,1$ in \eqref{other dissipation} of Lemma \ref{other di} and then summing up, we obtain
\begin{eqnarray}  \label{end 2}
&&\frac{d}{dt}\left(\sum_{l=0}^2\int \na^lu\cdot\na\na^{l} n +\sum_{l=0}^2\int \na^{l}u\cdot\na^lE  -\eta\sum_{l=0}^1\int
\na^l E \cdot\na^{l}\na\times B \right)\nonumber
\\&&\quad+\la \left(\sum_{l=0}^3\norm{\na^{l}n}_{L^2}^2+\sum_{l=0}^2\norm{\na^{l}E}_{L^2}^2+ \sum_{l=1}^2 \norm{\na^{l} B}_{L^2}^2\right)\lesssim
 \sum_{l=0}^3\norm{\na^{l} (u,\theta)}_{L^2}^2+\delta^2\mathcal{D}_3 .
\end{eqnarray}
 Since $\delta$ is small, we deduce from $\eqref{end 2}\times\varepsilon+ \eqref{end 1}$ that there exists an instant energy functional $\widetilde{\mathcal{E}}_3$ equivalent to ${\mathcal{E}}_3$ such that
\begin{equation*}
\frac{d}{dt}\widetilde{\mathcal{E}}_3+\mathcal{D}_3\le 0.
\end{equation*}
Integrating the inequality above directly in time, we obtain \eqref{energy inequality}. By a standard continuity argument, we then close the a priori estimates if we assume at initial time that $\mathcal{E}_3(0)\le \delta_0$ is sufficiently small. This concludes the unique global small $\mathcal{E}_3$ solution.

{\it Step 2. Global $\mathcal{E}_N$ solution.}

We shall prove this by an induction on $N\ge 3$. By \eqref{energy inequality}, then \eqref{energy inequality N} is valid for $N=3$. Assume \eqref{energy inequality N} holds for $N-1$ (then now $N\ge 4$). Taking $k=0,\dots,N-2$ in \eqref{energy 1} of Lemma \ref{u} and then summing up, we obtain
\begin{eqnarray} \label{end 3}
&&\frac{d}{dt}\sum_{l=0}^{N} \norm{ \na^{l}(n,u,\theta,E,B)}_{L^2}^2 +\la\sum_{l=0}^{N}\norm{\na^{l} (u,\theta)}_{L^2}^2 \nonumber\\&& \quad\le C_N \sqrt{\mathcal{D}_{N-1}}\sqrt{\mathcal{E}_N}\sqrt{\mathcal{D}_N}+C\sqrt{\mathcal{D}_{N-1}}\sqrt{\mathcal{D}_N}\sqrt{\mathcal{E}_N}
\le C_N \sqrt{\mathcal{D}_{N-1}}\sqrt{\mathcal{E}_N}\sqrt{\mathcal{D}_N}.
\end{eqnarray}
Here we have used the fact that $3\le \frac{N-2}{2}+2\le N-2+1$ since $N\ge 4$.
Note that it is important that we have put the two first factors in \eqref{energy 1} into the dissipation.

Taking  $k=0,\dots,N-2$ in \eqref{other dissipation} of Lemma \ref{other di} and then summing up, we obtain
\begin{eqnarray}  \label{end 4}
&&\frac{d}{dt}\left(\sum_{l=0}^{N-1}\int \na^lu\cdot\na\na^{l} n +\sum_{l=0}^{N-1}\int \na^{l}u\cdot\na^lE -\eta\sum_{l=0}^{N-2}\int
\na^l E \cdot\na\times\na^{l} B \right)\nonumber
\\&&\quad+\la \left(\sum_{l=0}^{N}\norm{\na^{l}n}_{L^2}^2+\sum_{l=0}^{N-1}\norm{\na^{l}E}_{L^2}^2+ \sum_{l=1}^{N-1} \norm{\na^{l} B}_{L^2}^2\right)\nonumber
 \\ &&\qquad\le
 C\sum_{l=0}^{N}\norm{\na^{l} (u,\theta)}_{L^2}^2
 +C_N  \sqrt{\mathcal{D}_{N-1}}\sqrt{\mathcal{D}_N}\sqrt{\mathcal{E}_N}.
\end{eqnarray}
We deduce from $\eqref{end 4}\times \varepsilon+\eqref{end 3}$ that there exists an instant energy functional $\widetilde{\mathcal{E}}_N$ equivalent to $\mathcal{E}_N$ such that, by Cauchy's inequality,
\begin{equation*}
\frac{d}{dt}\widetilde{\mathcal{E}}_N+\mathcal{D}_N\le  C_N \sqrt{\mathcal{D}_{N-1}}\sqrt{\mathcal{E}_N}\sqrt{\mathcal{D}_N}
 \le \varepsilon \mathcal{D}_N+ C_{N,\varepsilon} {\mathcal{D}_{N-1}} {\mathcal{E}_N}.
\end{equation*}
This implies
\begin{equation*}
\frac{d}{dt}\widetilde{\mathcal{E}}_N+\frac{1}{2}\mathcal{D}_N
 \le  C_{N } {\mathcal{D}_{N-1}} {\mathcal{E}_N}.
\end{equation*}
We then use the standard Gronwall lemma and the induction hypothesis to deduce that
\begin{eqnarray*}
 \mathcal{E}_N(t)+\int_0^t\mathcal{D}_N(\tau)\,d\tau
&& \le  C\mathcal{E}_N(0)e^{C_{N }\int_0^t{\mathcal{D}_{N-1}} (\tau)\,d\tau}\le  C\mathcal{E}_N(0)e^{C_{N }P_{N-1}\left(\mathcal{E}_{N-1}(0)\right)}\\&&\le  C\mathcal{E}_N(0)e^{C_{N }P_{N-1}\left(\mathcal{E}_{N}(0)\right)}\equiv P_N\left(\mathcal{E}_{N}(0)\right).
\end{eqnarray*}
This concludes the global $\mathcal{E}_N$ solution. The proof of Theorem \ref{existence} is completed.\hfill$\Box$

\subsection{Proof of Theorem \ref{decay}}
In this subsection, we will prove the various time decay rates of the unique global solution to the system \eqref{yi} obtained in Theorem \ref{existence}. Fix $N\ge 5$. We need to assume that $\mathcal{E}_N(0)\le \delta_0=\delta_0(N)$ is small. Then Theorem \ref{existence} implies that there exists a unique global $\mathcal{E}_N$ solution, and $\mathcal{E}_N(t)\le P_{N}\left(\mathcal{E}_N(0)\right) \le \delta_0$ is small for all time $t$. Since now our $\delta_0$ is relative small with respect to $N$, we just ignore the $N$ dependence of the constants in the energy estimates in the previous section.

{\it Step 1. The $\dot{H}^{-s}$ or $\dot{B}_{2,\infty}^{-s}$ norm is preserved along time evolution.}

\eqref{H-sbound} and \eqref{H-sbound Besov} indicate that the $\dot{H}^{-s}$ or $\dot{B}_{2,\infty}^{-s}$ norm of $(u,\theta,E,B)(t)$ is preserved along time evolution.
First, we prove \eqref{H-sbound} by Lemma \ref{1Esle}. However, we are not
able to prove them for all $s\in[0,3/2)$ at this moment. We must distinct the arguments by the value of $s$.
First, for $s\in (0,1/2]$, integrating \eqref{1E_s} in time, by \eqref{energy inequality} we obtain that for $s\in (0,1/2]$,
\begin{eqnarray*}
\norm{(u,\theta,E,B)(t)}_{\dot{H}^{-s}}^2&&\lesssim \norm{(u_0,\theta_0,E_0,B_0) }_{\dot{H}^{-s}}^2+ \int_{0}^{t} \mathcal{D}_3(\tau ) \left(1+\norm{(u,\theta,E,B)(\tau)}_{\dot{H}^{-s}}\right)
 \,d\tau  \nonumber\\
&&\leq  C_0\left(1+\sup_{0\leq \tau \leq t}\norm{(u,\theta,E,B)(\tau)}_{\dot{H}^{-s}}\right).
\end{eqnarray*}
By Cauchy's inequality, this together with \eqref{energy inequality}
gives \eqref{H-sbound} for $s\in [ 0,1/2]$ and thus verifies \eqref{basic decay} for $s\in [ 0,1/2]$. Next, we let $s\in (1/2,1)$.
Observing that we have $ (u_0,\theta_0,E_0,B_0)\in \dot{H}^{-1/2}
$ since $\dot{H}^{-s}\cap L^2\subset\dot{H}^{-s^{\prime}}$ for any $
s^{\prime}\in [0,s]$, we then deduce from what we have proved for
\eqref{basic decay} with $s=1/2$ that the following decay
result holds:
\begin{equation}  \label{1proof14}
\norm{\na^k  (n,u,\theta,E,B)(t)}
_{H^{2}}  \le C_0(1+t)^{-\frac{k+  {1}/{2}}{2}}\ \hbox{ for }k=0,1.
\end{equation}
Here, since we have required $N\ge 5$ and now $s=1/2$, so we can have taken $k=1$ in \eqref{basic decay}. Thus by \eqref{1proof14}, \eqref{energy inequality} and H\"older's inequality, we deduce from \eqref{1E_s2} that for $
s\in(1/2,1)$,
\begin{eqnarray}  \label{1-sin2''}
\norm{(u,\theta,E,B)(t)}_{\dot{H}^{-s}}^2
&&\lesssim \norm{(u_0,\theta_0,E_0,B_0) }_{\dot{H}^{-s}}^2+ \int_{0}^{t} \mathcal{D}_3(\tau ) \left(1+\norm{(u,\theta,E,B)(\tau)}_{\dot{H}^{-s}}\right)
 \,d\tau\nonumber\\
&&\quad+\int_0^t\norm{B(\tau )}_{L^2}^{s-1/2}\norm{\na B(\tau )}_{L^2}^{3/2-s}\sqrt{\mathcal{D}_3(\tau )}\norm{(u,\theta,E,B)(\tau)}_{\dot{H}^{-s}}\,d\tau\nonumber\\
&&\le C_0\left(1+\left(1+\int_0^t(1+\tau)^{-2(1-s/2)}\,d\tau \right)\sup_{0\le\tau\le t}\norm{(u,\theta,E,B)(\tau)}_{\dot{H}^{-s}}\right)\nonumber\\
&&\le C_0\left(1+\sup_{0\le\tau\le t}\norm{(u,\theta,E,B)(\tau)}_{\dot{H}^{-s}}\right).
\end{eqnarray}
Here we have used the fact $s\in(1/2,1)$ so that the time integral in \eqref{1-sin2''} is finite.
This gives \eqref{H-sbound} for $s\in (1/2,1)$ and thus verifies \eqref{basic decay} for $s\in (1/2,1)$.
Now let $s\in [1,3/2)$. We choose $s_0$ such that $s-1/2<s_0<1$. Hence, $ (u _0,\theta_0,E_0,B_0)\in \dot{H}^{-s_0}$. We then deduce from what we have proved for \eqref{basic decay} with $s=s_0$ that the following decay
result holds:
\begin{equation}  \label{2proof14}
\norm{\na^k  (n,u,\theta,E,B)(t)}
_{H^{2}}  \le C_0(1+t)^{- \frac{k+s_0}{2} }\ \hbox{ for }k=0,1.
\end{equation}
Here, since we have required $N\ge 5$ and now $s=s_0<1$, so we can have taken $k=1$ in \eqref{basic decay}. Thus by \eqref{2proof14} and H\"older's inequality, we deduce from \eqref{1E_s2} that for $
s\in[1,3/2)$, similarly as in \eqref{1-sin2''},
\begin{eqnarray}\label{mmm}
 \norm{(u,\theta,E,B)(t)}_{\dot{H}^{-s}}^2
&&\le C_0\left(1+\left(1+\int_0^t(1+\tau)^{- ( {s_0} + 3/2-  s )}\,d\tau \right)\sup_{0\le\tau\le t}\norm{(u,\theta,E,B)(\tau)}_{\dot{H}^{-s}}\right)\nonumber\\
&&\le C_0\left(1+\sup_{0\le\tau\le t}\norm{(u,\theta,E,B)(\tau)}_{\dot{H}^{-s}}\right).
\end{eqnarray}
Here we have used the fact $s-s_0<1/2$ so that the time integral in \eqref{mmm} is finite.
This gives \eqref{H-sbound} for $s\in [1,3/2)$ and thus verifies \eqref{basic decay} for $s\in [1,3/2)$. Note that
\eqref{H-sbound Besov} can be proved similarly except that we use instead  Lemma \ref{1Esle2}.

{\it Step 2. Basic decay.}

For the convenience of presentations, we define a family of energy functionals and the corresponding dissipation rates with {\it minimum derivative counts} as
\begin{equation}\label{1111}
\mathcal{E}_{k}^{k+2}=\sum_{l=k}^{k+2}\norm{ \na^{l}(n, u,\theta, E, B )}_{L^2}^2
\end{equation}
and
\begin{equation}\label{2222}
\mathcal{D}_{k}^{k+2}=\sum_{l=k}^{k+2}\norm{\na^{l}(n,u,\theta)}_{L^2}^2+\sum_{l=k}^{k+1}\norm{\na^{l}E}_{L^2}^2+ \norm{\na^{k+1} B}_{L^2}^2.
\end{equation}

By Lemma \ref{u}, we have that for $k=0,\dots,N-2$,
\begin{eqnarray}  \label{end 5}
&&\frac{d}{dt}\sum_{l=k}^{k+2}\norm{ \na^{l}(n, u,\theta, E, B )}_{L^2}^2 +\la\sum_{l=k}^{k+2}\norm{\na^{l} (u,\theta)}_{L^2}^2\nonumber \\
&&\quad\lesssim \sqrt{\delta_0} \mathcal{D}_k^{k+2} +\norm{(n,u)}_{L^\infty}\norm{\na^{k+2}(n, u )}_{L^2}
\norm{ \na^{k+2}( E,  B )}_{L^2}.
\end{eqnarray}
By Lemma \ref{other di}, we have that for $k=0,\dots,N-2$,
\begin{eqnarray} \label{end 6}
&&\frac{d}{dt}\left(\sum_{l=k}^{k+1}\int \na^lu\cdot\na\na^{l} n +\sum_{l=k}^{k+1}\int \na^{l}u\cdot\na^lE  -\eta\int
\na^k E \cdot\na^{k}\na\times B \right)\nonumber
\\&&\quad+\la \left(\sum_{l=k}^{k+2}\norm{\na^{l}n}_{L^2}^2+\sum_{l=k}^{k+1}\norm{\na^{l}E}_{L^2}^2+ \norm{\na^{k+1} B}_{L^2}^2\right)\nonumber
\\&&\qquad\lesssim\sum_{l=k}^{k+2}\norm{\na^{l} (u,\theta)}_{L^2}^2
 +\delta_0 \sum_{l=k}^{k+2}\norm{\na^{l} (n, u )}_{L^2}^2.
\end{eqnarray}
Since $\delta_0$ is small, we deduce from $\eqref{end 6}\times \varepsilon+\eqref{end 5}$ that there exists an instant energy functional $\widetilde{\mathcal{E}}_k^{k+2}$ equivalent to $\mathcal{E}_k^{k+2}$ such that
\begin{equation}\label{energy}
\frac{d}{dt}\widetilde{\mathcal{E}}_k^{k+2}+\mathcal{D}_k^{k+2}\lesssim   \norm{(n,u)}_{L^\infty}\norm{\na^{k+2}(n, u )}_{L^2}
\norm{ \na^{k+2}( E,  B )}_{L^2}.
\end{equation}
Note that we can not absorb the right-hand side of \eqref{energy} by the dissipation $\mathcal{D}_k^{k+2}$ since it does not contain $\norm{\na^{k+2}( E, B )}_{L^2}^2$. We will distinct the arguments by the value of $k$. If $k=0$ or $k=1$, we bound $\norm{\na^{k+2}( E, B )}_{L^2}$ by the energy. Then we have that for $k=0,1$,
\begin{equation*}
\frac{d}{dt}\widetilde{\mathcal{E}}_k^{k+2}+\mathcal{D}_k^{k+2}\lesssim   \sqrt{\mathcal{D}_k^{k+2}}\sqrt{\mathcal{D}_k^{k+2}}\sqrt{{\mathcal{E}}_3}\lesssim \sqrt{\delta_0}\mathcal{D}_k^{k+2},
\end{equation*}
which implies
\begin{equation*}
\frac{d}{dt}\widetilde{\mathcal{E}}_k^{k+2}+\mathcal{D}_k^{k+2}\le 0.
\end{equation*}
If $k\ge 2$, we have to bound $\norm{\na^{k+2}( E, B )}_{L^2}$ in term of $\norm{\na^{k+1}( E, B )}_{L^2}$ since $\sqrt{\mathcal{D}_k^{k+2}}$ can not control $\norm{(n,u)}_{L^\infty}$. The key point is to use the regularity interpolation method developed in \cite{GW,SG06}. By Lemma \ref{A1}, we have
\begin{eqnarray}\label{kkk}
&&\norm{(n,u)}_{L^\infty}\norm{ \na^{k+2}(n,u )}_{L^2} \norm{ \na^{k+2}( E, B )}_{L^2}\nonumber\\&&\quad\lesssim\norm{(n,u)}_{L^2}^{1-\frac{3 }{2k}}\norm{\na^{k}(n,u )}_{L^2}^{\frac{3 }{2k}}\norm{\na^{k+2}(n,u )}_{L^2}\norm{\na^{k+1}( E, B )}_{L^2}^{1-\frac{3 }{2k}}
\norm{\na^{\al}( E, B )}_{L^2}^{\frac{3 }{2k}},
\end{eqnarray}
where $\alpha$ is defined by
\begin{equation*}
 k+2=(k+1)\times\left(1-\frac{3 }{2k}\right)+\alpha\times \frac{3 }{2k}
 \Longrightarrow \alpha=\frac{5}{3}k+1.
\end{equation*}
Hence, for $k\ge 2$, if $N\ge \frac{5}{3}k+1\Longleftrightarrow 2\le k\le \frac{3}{5}(N-1)$, then by \eqref{kkk}, we deduce from \eqref{energy} that
\begin{equation*}
\frac{d}{dt}\widetilde{\mathcal{E}}_k^{k+2}+\mathcal{D}_k^{k+2} \lesssim  \sqrt{{\mathcal{E}}_{N}} {\mathcal{D}_k^{k+2}}\lesssim \sqrt{\delta_0}\mathcal{D}_k^{k+2},
\end{equation*}
which allow us to arrive at that for any integer $k$ with $0\le k\le  \frac{3}{5}(N-1) $ (note that $N-2\ge \frac{3}{5}(N-1)\ge 2$ since $N\ge 5$), we have
\begin{equation}\label{k energy}
\frac{d}{dt}\widetilde{\mathcal{E}}_k^{k+2}+\mathcal{D}_k^{k+2} \le 0.
\end{equation}

We now begin to derive the decay rate from \eqref{k energy}. Using Lemma
\ref{1-sinte} and \eqref{H-sbound}, we have that for $s\ge 0$ and $k+s>0$,
\begin{equation*}
\norm{\na^k B }_{L^2} \le  \norm{B}_{\dot{H}^{-s}}^{\frac{1}{k+1+s}}\norm{\na^{k+1}B }_{L^2}^{\frac{k+s}{k+1+s}}
 \le  C_0\norm{\na^{k+1}B}_{L^2}^{\frac{k+s}{k+1+s}}.
\end{equation*}
Similarly, using Lemma
\ref{Besov interpolation} and \eqref{H-sbound Besov}, we  have that for $s>0$ and $k+s>0$,
\begin{equation*}
\norm{\na^k B }_{L^2} \le  \norm{B }_{\dot{B}_{2,\infty}^{-s}}^{\frac{1}{k+1+s}}\norm{\na^{k+1}B }_{L^2}^{\frac{k+s}{k+1+s}}
 \le  C_0\norm{\na^{k+1}B}_{L^2}^{\frac{k+s}{k+1+s}}.
\end{equation*}
On the other hand, for $k+2<N$, we have
\begin{equation*}
\begin{split}
\norm{ \na^{k+2}( E, B )}_{L^2}\le  \norm{\na^{k+1}( E, B )}_{L^2}^{ \frac{N-k-2}{N-k-1}}
\norm{\na^N( E, B )}_{L^2}^{\frac{1}{N-k-1}}\le  C_0\norm{\na^{k+1}( E, B )}_{L^2}^{ \frac{N-k-2}{N-k-1}}.
\end{split}
\end{equation*}
Then we deduce from \eqref{k energy} that
\begin{equation*}
\frac{d}{dt}\widetilde{\mathcal{E}}_k^{k+2}+\left\{\mathcal{E}_k^{k+2}\right\}^{1+\vartheta} \le 0,
\end{equation*}
where $\vartheta=\max\left\{\frac{1}{k+s},\frac{1}{N-k-2}\right\}$.
Solving this inequality directly, we obtain in particular that
\begin{equation}\label{nnn}
\mathcal{E}_k^{k+2}  (t) \le \left\{\left[\mathcal{E}_k^{k+2}(0)\right]^{-\vartheta}+\vartheta  t\right\}^{-  {1}/{\vartheta}}
\le C_0 (1+ t)^{- {1}/{\vartheta}}=C_0 (1+ t)^{-\min\left\{ {k+s}, {N-k-2}\right\}}.
\end{equation}
Notice that \eqref{nnn} holds also for $k+s=0$ or $k+2=N$. So, if we want to obtain the optimal decay rate of the whole solution for the spatial derivatives of order $k$, we only need to assume $N$ large enough (for fixed $k$ and $s$) so  that $k+s\le N-k-2$. Thus we should require that
\begin{equation*}
N\ge \max\left\{k+2, \frac{5}{3}k+1, 2k+2+s\right\}= 2k+2+s.
\end{equation*}
This proves the optimal decay \eqref{basic decay}.

{\it Step 3. Further decay.}

We first prove \eqref{further decay1} and \eqref{further decay11}. First, noticing that $-n={\rm div}E$, by \eqref{basic decay}, if $N\ge2k+4+s$, then
\begin{equation}\label{n further decay}
\norm{\na^kn(t)}_{L^2}\lesssim\norm{\na^{k+1}E(t)}_{L^2}\lesssim C_0(1+t)^{-\frac{k+1+s}{2}}.
\end{equation}

Next, applying $\na^k$ to $\eqref{yi}_2, \eqref{yi}_3, \eqref{yi}_4$ and then multiplying the resulting identities
by $\na^ku$, $\frac32\na^k\theta$, $\na^kE$ respectively, summing up and integrating over $\mathbb{R}^3$, we obtain
\begin{eqnarray}\label{uE yi}
&&\frac{1}{2}\frac{d}{dt}\int\norms{\na^k(u,E)}^2+\frac32\norms{\na^k\theta}^2+\norm{\na^ku}_{L^2}^2+\frac32\norm{\na^k\theta}_{L^2}^2\nonumber\\
&&\quad=-\int\na^k\left(u\cdot\na u
+\frac{1+\theta}{1+n}\na n+u\times B\right)\cdot\na^ku+\int\na^k\left( \na\times B+ nu\right)\cdot\na^kE\nonumber\\
&&\qquad-\frac32\int \na^k\left(u\cdot\na \theta+\frac23\theta{\rm div}u+\frac13|u|^2\right)\na^k \theta\nonumber
\\&&\quad\lesssim \norm{\na^{k}\left(u\cdot\na
u+\frac{1+\theta}{1+n}\na n+u\times B\right)}_{L^2}\norm{\na^{k} u}_{L^2}
+\norm{ \na^{k}\left(\na\times B+nu
\right)}_{L^2}\norm{\na^{k} E}_{L^2}\nonumber\\
&&\qquad+\norm{\na^{k}\left(u\cdot\na
\theta+\theta{\rm div}u+|u|^2\right)}_{L^2}\norm{\na^{k} \theta}_{L^2}.
\end{eqnarray}
On the other hand, taking $l=k$ in \eqref{yi E}, we may have
\begin{eqnarray} \label{uE san}
&&\int  \na^k \partial_tu \cdot\na^{k}E +\norm{\na^{k} E}_{L^2}^2\nonumber
\\
&&\quad\lesssim\left[\norm{\na^{k+1}\theta}_{L^2}+\norm{\na^{k} u}_{L^2}+\norm{\na^{k}\left(u\cdot\na
u+\frac{1+\theta}{1+n}\na n+u\times B\right)}_{L^2}\right]\norm{\na^{k} E}_{L^2}.
\end{eqnarray}
Substituting \eqref{er E} with $l=k$ into \eqref{uE san}, we may then have
\begin{eqnarray}\label{uE wu}
\frac{d}{dt}&&\int  \na^ku \cdot\na^{k}E +\norm{\na^{k} E}_{L^2}^2\nonumber
\\
 &&\lesssim \norm{\na^{k} u}_{L^2}^2 +\norm{\na^{k}\left(u\cdot\na
u+\frac{1+\theta}{1+n}\na n+u\times B\right)}_{L^2}\norm{\na^{k} E}_{L^2}\nonumber\\
&&\quad+\left(\norm{\na^{k+1}\theta}_{L^2}+ \norm{\na^{k} u}_{L^2}\right)\norm{\na^{k}  E}_{L^2}+\norm{ \na^{k}\left(\na\times B+nu
\right)}_{L^2}\norm{\na^{k} u}_{L^2}.
\end{eqnarray}
Since $\varepsilon$ is small, we deduce from $\eqref{uE wu}\times\varepsilon+\eqref{uE yi}$ that there exists $\mathcal{F}_k(t)$ equivalent to $\norm{\na^k(u,\theta,E)(t)}_{L^2}^2$ such that, by Cauchy's inequality, Lemma \ref{commutator}, \eqref{beauty}, \eqref{I3 k}, \eqref{basic decay} and \eqref{n further decay},
\begin{eqnarray}\label{uE liu}
\frac{d}{dt}\mathcal{F}_k(t)+\mathcal{F}_k(t)
&&\lesssim \norm{\na^{k+1} \theta}_{L^2}^2+\norm{\na^{k+1} B}_{L^2}^2 +\norm{\na^{k}\left(u\cdot\na
u+\frac{1+\theta}{1+n}\na n+u\times B\right)}_{L^2}^2\nonumber\\
&&\quad+\norm{\na^{k}\left(u\cdot\na \theta+\theta{\rm div}u+|u|^2\right)}_{L^2}^2 +\norm{ \na^{k}(nu)}_{L^2}^2\nonumber\\
&&\lesssim \norm{\na^{k+1} (n,\theta, B)}_{L^2}^2 + \left(\norm{u}_{H^{\frac k2}}+\norm{\na B}_{L^2}\right)^2\norm{\na^{k+1}B}_{L^2}^2\nonumber\\
&&\quad+\norm{(\na n,\na\theta,u)}_{L^\infty}^2\norm{\na^kn}_{L^2}^2+\norm{(n,u,\theta)}_{L^\infty}^2\norm{\na^{k+1}(n,u,\theta)}_{L^2}^2\nonumber\\
&&\le C_0(1+t)^{-(k+1+s)},
\end{eqnarray}
where we required $N\ge2k+4+s$.
Applying the standard Gronwall lemma to \eqref{uE liu}, we obtain
\begin{equation*}
\begin{split}
\mathcal{F}_k(t)\le \mathcal{F}_k(0)e^{-t}+C_0\int_0^te^{-(t-\tau)}(1+\tau)^{-(k+1+s)}\,d\tau\lesssim C_0(1+t)^{-(k+1+s)}.
\end{split}
\end{equation*}
This implies
\begin{equation*}
\norm{\na^k(u,\theta,E)(t)}_{L^2}\lesssim \sqrt{\mathcal{F}_k(t)}\lesssim C_0(1+t)^{-\frac{k+1+s}{2}}.
\end{equation*}
We thus complete the proof of \eqref{further decay1}. Notice that \eqref{further decay11} now follows by \eqref{n further decay} with the improved decay rate of $E$ in \eqref{further decay1}, just requiring $N\ge2k+6+s$.

Now we prove \eqref{further decay2}. Assuming $ B_\infty =0$, then we can extract the following system from  $\eqref{yi}_1$--$\eqref{yi}_2$, denoting $\psi={\rm div} u$,
\begin{equation}\label{npsi yi}
\left\{
\begin{array}{lll}
\displaystyle\partial_tn+\psi=-u\cdot\na n
-n{\rm div} u,   \\
\displaystyle\partial_t \psi+ \psi-n=-\Delta \theta-{\rm div}\left(u\cdot\na u
+\frac{1+\theta}{1+n}\na n+u\times B\right).
\end{array}
\right.
\end{equation}
Applying $\na^k$ to $\eqref{npsi yi}$ and then multiplying the resulting identities
by $\na^kn$, $\na^k\psi$, respectively, summing up and integrating over $\mathbb{R}^3$, we obtain
\begin{eqnarray}\label{npsi er}
\frac{1}{2}\frac{d}{dt}\int \norms{\na^k n }^2+\norms{\na^k \psi }^2+&&\norm{\na^k\psi}_{L^2}^2=-\int\na^k(u\cdot\na n+n{\rm div} u) \na^kn-\int\na^k \Delta \theta \na^k\psi\nonumber\\
&&\qquad\quad-\int\na^k {\rm div}\left(u\cdot\na u
+\frac{1+\theta}{1+n}\na n+u\times B\right) \na^k\psi.
\end{eqnarray}
Applying $\na^k$ to $\eqref{npsi yi}_2$ and then multiplying by $-\na^kn$, as before integrating by parts over $t$ and $x$ variables and using the equation $\eqref{npsi yi}_1$, we may obtain
\begin{eqnarray}\label{npsi san}
-\frac{d}{dt}\int\na^k\psi \na^kn+&&\norm{\na^kn}_{L^2}^2
=\norm{\na^k\psi}_{L^2}^2+\int\na^kn \na^k\psi+\int\na^k(u\cdot\na n+n{\rm div} u) \na^k\psi\nonumber\\
&&\qquad\quad+\int\na^k\left[\Delta \theta+{\rm div}\left(u\cdot\na u
+\frac{1+\theta}{1+n}\na n+u\times B\right)\right] \na^kn.
\end{eqnarray}

Since $\varepsilon$ is small, we deduce from $\eqref{npsi san}\times\varepsilon+\eqref{npsi er}$ that there exists $\mathcal{G}_k(t)$ equivalent to $\norm{\na^k(n,\psi)}_{L^2}^2$ such that, by Cauchy's inequality,
\begin{eqnarray}\label{npsi si}
 \frac{d}{dt}\mathcal{G}_k(t)+\mathcal{G}_k(t)&&\lesssim  \norm{\na^{k+2}\theta}_{L^2}^2+\norm{\na^{k+1}(u\cdot\na u)}_{L^2}^2+\norm{\na^{k+1}\left(\frac{1+\theta}{1+n}\na n\right)}_{L^2}^2 \nonumber\\
 &&\quad +\norm{\na^{k+1}(u\times B)}_{L^2}^2+ \norm{\na^k(u\cdot\na n)}_{L^2}^2+\norm{\na^k(n{\rm div}u)}_{L^2}^2.
\end{eqnarray}
By Lemma \ref{commutator}, \eqref{beauty} and Cauchy's inequality, we obtain
\begin{eqnarray*}
\norm{\na^{k+1}\left(\frac{1+\theta}{1+n}\na n\right)}_{L^2}^2
&&\lesssim\norm{\left[\na^{k+1},\frac{1+\theta}{1+n}\right]\na n}_{L^2}^2+\norm{\frac{1+\theta}{1+n}\na^{k+2}n}_{L^2}^2\nonumber\\
&&\lesssim\norm{\na^{k+2}n}_{L^2}^2+\norm{\na(n,\theta)}_{L^\infty}^2\norm{\na^{k+1}(n,\theta)}_{L^2}^2,\\
\end{eqnarray*}and
\begin{eqnarray*}
\norm{\na^{k+1}(u\times B)}_{L^2}^2&&=\norm{u\times \na^{k+1}B+\left[\na^{k+1}, u\right]\times B}_{L^2}^2\norm{u\times \na^{k+1}B}_{L^2}^2+\norm{\left[\na^{k+1}, u\right]\times B}_{L^2}^2\nonumber\\
&&\lesssim\norm{u}_{L^\infty}^2\norm{\na^{k+1}B}_{L^2}^2+\norm{\na u}_{L^\infty}^2\norm{\na^{k}B}_{L^2}^2+\norm{\na^{k+1}u}_{L^2}^2\norm{B}_{L^\infty}^2.
\end{eqnarray*}
The other nonlinear terms on the right-hand side of \eqref{npsi si} can be estimated similarly.
Hence, we deduce from \eqref{npsi si} that, by \eqref{basic decay}--\eqref{further decay11},
\begin{eqnarray}\label{npsi qi}
&&\frac{d}{dt}\mathcal{G}_k(t)+\mathcal{G}_k(t)\nonumber
 \\&&\quad\lesssim \norm{\na^{k+2}(n,\theta)}_{L^2}^2+\norm{u}_{L^\infty}^2\norm{\na^{k+1}B}_{L^2}^2+\norm{\na u}_{L^\infty}^2\norm{\na^{k}B}_{L^2}^2+\norm{B}_{L^\infty}^2\norm{\na^{k+1}u}_{L^2}^2\nonumber\\
&&\qquad+\norm{(n,u)}_{L^\infty}^2\norm{\na^{k+2}(n,u)}_{L^2}^2 +\norm{\na(n,u,\theta)}_{L^\infty}^2\norm{\na^{k+1}(n,u,\theta)}_{L^2}^2\nonumber\\
&&\quad\le C_0\left((1+t)^{-(k+3+s)}+  (1+t)^{-(k+7/2+2s)} + (1+t)^{-(k+11/2+2s)} \right)\nonumber
\\&&\quad\le C_0(1+t)^{-(k+3+s)},
\end{eqnarray}
where we required $N\ge2k+8+s$.
Applying the Gronwall lemma to \eqref{npsi qi} again, we obtain
\begin{equation*}
\mathcal{G}_k(t)\le \mathcal{G}_k(0)e^{-t}+C_0\int_0^te^{-(t-\tau)}(1+\tau)^{-(k+3+s)}\,d\tau\le  C_0(1+t)^{-(k+3+s)}.
\end{equation*}
This implies
\begin{equation}\label{npsi ba}
\norm{\na^k(n,\psi)(t)}_{L^2}\lesssim \sqrt{\mathcal{G}_k(t) }\le C_0 (1+t)^{-\frac{k+3+s}{2}}.
\end{equation}

Now we consider the following system which consists of \eqref{npsi yi} and $\eqref{yi}_3$:
\begin{equation}\label{npsitheta yi}
\left\{
\begin{array}{lll}
\displaystyle\partial_tn+\psi=-u\cdot\na n
-n{\rm div} u,   \\
\displaystyle\partial_t \psi+ \psi-n=-\Delta \theta-{\rm div}\left(u\cdot\na u
+\frac{1+\theta}{1+n}\na n+u\times B\right),\\
\displaystyle\partial_t \theta+ \theta=-u\cdot\na\theta-\frac23(1+\theta){\rm div}u+\frac13|u|^2.
\end{array}
\right.
\end{equation}
First, we have the standard energy identity for the system \eqref{npsitheta yi}
\begin{eqnarray}\label{npsitheta er}
\frac{1}{2}&&\frac{d}{dt}\int \norms{\na^k n }^2+\norms{\na^k \psi }^2+\norms{\na^k \theta }^2+\norm{\na^k(\psi,\theta)}_{L^2}^2\nonumber\\
&&=-\int\na^k(u\cdot\na n+n{\rm div} u) \na^kn-\int\na^k \Delta \theta \na^k\psi-\frac23\int\na^k\psi\na^k\theta\nonumber\\
&&\quad-\int\na^k {\rm div}\left(u\cdot\na u
+\frac{1+\theta}{1+n}\na n+u\times B\right) \na^k\psi\nonumber\\
&&\quad-\int\na^k \left(u\cdot\na\theta+\frac23\theta{\rm div}u-\frac13|u|^2\right) \na^k\theta.
\end{eqnarray}
Since $\varepsilon$ is small, we deduce from $\eqref{npsi san}\times\varepsilon+\eqref{npsitheta er}$ that there exists $\mathcal{H}_k(t)$ equivalent to $\norm{\na^k(n,\psi,\theta)}_{L^2}^2$ such that, by Cauchy's inequality and \eqref{npsi ba}
\begin{eqnarray}\label{npsitheta san}
 \frac{d}{dt}\mathcal{H}_k(t)+\mathcal{H}_k(t)&&\lesssim  \norm{\na^{k+2}\theta}_{L^2}^2+\norm{\na^{k+1}(u\cdot\na u)}_{L^2}^2+\norm{\na^{k+1}\left(\frac{1+\theta}{1+n}\na n\right)}_{L^2}^2 \nonumber\\
&&\quad +\norm{\na^{k+1}(u\times B)}_{L^2}^2+ \norm{\na^k(u\cdot\na n)}_{L^2}^2+\norm{\na^k(n{\rm div}u)}_{L^2}^2\nonumber\\
&&\quad +\norm{\na^{k}\psi}_{L^2}^2+ \norm{\na^k(|u|^2)}_{L^2}^2+\norm{\na^k(u\cdot\na \theta)}_{L^2}^2+\norm{\na^k(\theta{\rm div}u)}_{L^2}^2\nonumber\\
&&\quad\le C_0\left((1+t)^{-(k+3+s)}+  (1+t)^{-(k+7/2+2s)} + (1+t)^{-(k+11/2+2s)} \right)\nonumber
\\&&\quad\le C_0(1+t)^{-(k+3+s)},
\end{eqnarray}where we required $N\ge2k+8+s$. Applying the Gronwall lemma to \eqref{npsitheta san} again, we obtain
\begin{equation*}
\mathcal{H}_k(t)\le \mathcal{H}_k(0)e^{-t}+C_0\int_0^te^{-(t-\tau)}(1+\tau)^{-(k+3+s)}\,d\tau\le  C_0(1+t)^{-(k+3+s)}.
\end{equation*}
This implies
\begin{equation}\label{npsitheta si}
\norm{\na^k(n,\theta,\psi)(t)}_{L^2}\lesssim \sqrt{\mathcal{H}_k(t) }\le C_0 (1+t)^{-\frac{k+3+s}{2}}.
\end{equation}
If required $N\ge2k+12+s$, then by \eqref{npsitheta si}, we have
\begin{equation*}
\norm{\na^{k+2}(n,\theta)(t)}_{L^2} \lesssim C_0(1+t)^{-\frac{k+5+s}{2}}.
\end{equation*}
Having obtained such faster decay, we can then improve  \eqref{npsi qi} to be
\begin{equation*}
 \frac{d}{dt}\mathcal{G}_k(t)+\mathcal{G}_k(t)
 \le C_0\left((1+t)^{-(k+5+s)}+  (1+t)^{-(k+7/2+2s)}  \right)
  \le C_0(1+t)^{-(k+7/2+2s)}.
\end{equation*}
Applying the Gronwall lemma again, we obtain
\begin{equation*}
\norm{\na^k(n,\psi)(t)}_{L^2}\lesssim \sqrt{\mathcal{G}_k(t) }\le C_0 (1+t)^{-(k/2+7/4+s)}.
\end{equation*}
In light of the faster decay for $\na^k\psi$, we can then improve  \eqref{npsitheta san} to be
\begin{equation*}
 \frac{d}{dt}\mathcal{H}_k(t)+\mathcal{H}_k(t)
 \le C_0\left((1+t)^{-(k+5+s)}+  (1+t)^{-(k+7/2+2s)}  \right)
  \le C_0(1+t)^{-(k+7/2+2s)}.
\end{equation*}
Applying the Gronwall lemma again, we obtain
\begin{equation*}
\norm{\na^k(n,\theta,\psi)(t)}_{L^2}\lesssim \sqrt{\mathcal{H}_k(t) }\le C_0 (1+t)^{-(k/2+7/4+s)}.
\end{equation*}
We thus complete the proof of \eqref{further decay2}. The proof of Theorem \ref{decay} is completed.

\end{document}